\theoremstyle{plain}
\newtheorem{theorem}{Theorem}[section]
\newtheorem*{theorem*}{Theorem}
\newtheorem{definition}[theorem]{Definition}
\newtheorem{lemma}[theorem]{Lemma}
\newtheorem{prop}[theorem]{Proposition}
\newtheorem{cor}[theorem]{Corollary}
\newtheorem{rem}[theorem]{Remark}
\newtheorem*{mt*}{Main Theorem}
\DeclareMathOperator{\Imm}{Im}
\DeclareMathOperator{\Ker}{Ker}
\newcommand\C{{\mathbb C}}
\newcommand\R{{\mathbb R}}
\newcommand\Z{{\mathbb Z}}
\newcommand\T{{\mathbb T}}
\newcommand{\Cpf}{$\mathcal{C}^\infty$-pure-and-full}
\newcommand{\Cf}{$\mathcal{C}^\infty$-full}
\newcommand{\Cp}{$\mathcal{C}^\infty$-pure}
\newcommand{\del}{\partial}
\newcommand{\delbar}{\overline{\del}}
\title{On the Anti-Invariant Cohomology of Almost Complex Manifolds}
\author{Richard Hind and Adriano Tomassini}
\address{Department of Mathematics \\
University of Notre Dame \\
Notre Dame, IN 46556}
\email{hind.1@nd.edu}
\address{Dipartimento di Scienze Matematiche, Fisiche e Informatiche\\
Unit\`a di Matematica e Informatica,
Universit\`{a} degli Studi di Parma\\
Parco Area delle Scienze 53/A, 43124 \\
Parma, Italy}
\email{adriano.tomassini@unipr.it}
\keywords{almost complex structure; anti-invariant form; anti-invariant cohomology}
\thanks{The first author is partially supported by Simons Foundation grant \# 633715. \newline The second author is partially supported by the Project PRIN ``Varietà reali e complesse: geometria, topologia e analisi armonica'', Project PRIN 2017 ``Real and Complex Manifolds: Topology, Geometry and holomorphic dynamics''
and by GNSAGA of INdAM}
\subjclass[2010]{53C55, 53C25}
\begin{document}

\begin{abstract}
We study the space of closed anti-invariant forms on an almost complex manifold, possibly non compact. We construct families of (non integrable) almost complex structures on $\R^4$, such that the space of closed $J$-anti-invariant forms is infinite dimensional, and also $0$- or $1$-dimensional. In the compact case, we construct $6$-dimensional almost complex manifolds with arbitrary large anti-invariant cohomology and a $2$-parameter family of almost complex structures on the Kodaira-Thurston manifold whose anti-invariant cohomology group has maximum dimension.
\end{abstract}
\maketitle
\section{Introduction}
Cohomological properties provides a connection between analytical and topological features of complex manifolds. Indeed for a given complex manifold $(M,J)$, natural complex cohomologies are defined, e.g., the {\em Dolbeault, Bott-Chern} and {\em Aeppli} cohomology groups, given by
$$
H^{\bullet,\bullet}_{\overline{\partial}}(M)=\frac{\Ker\overline{\partial}}{\Imm \overline{\partial}}\,,\,\,
H^{\bullet,\bullet}_{BC}(M)=\frac{\Ker\del\cap\Ker\delbar}{\Imm \del\delbar}\,,\,\,
H^{\bullet,\bullet}_{A}(M)=\frac{\Ker\del\delbar}{\Imm \del+\Imm\delbar}\,.
$$
Furthermore, if $(M,J)$ is a compact complex manifold admitting a K\"ahler metric, that is a $J$-Hermitian metric whose fundamental form is closed, as a consequence of Hodge theory, the complex de Rham cohomology groups decompose as the direct sum of $(p,q)$-Dolbeault groups and strong topological restrictions on $M$ are derived.\newline 
For an almost complex manifold $(M,J)$ the exterior differential $d$ acting on the space of complex 
valued $(p,q)$-forms splits as 
$$
d=\mu+\partial+\overline{\partial}+\overline{\mu},
$$
where $\overline{\partial}$, respectively $\overline{\mu}$ are the $(p,q+1)$ respectively the $(p-1,q+2)$ 
components of $d$. It turns out that the almost complex structure $J$ is integrable if and only if 
$\overline{\mu}=0$. Consequently, in the non integrable case, $\overline{\partial}$ is not a cohomolgical 
operator.\newline 
In \cite{LZ} Li and Zhang, motivated by the study of comparison of tamed and compatible symplectic cones 
on a compact almost complex manifold, introduced the $J$-{\em anti-invariant} and $J$-{\em invariant cohomology groups} as the (real) de Rham $2$-classes represented by $J$-anti-invariant, respectively $J$-invariant forms and the notion of $\hbox{\em \Cpf}$ almost complex structures, namely those ones such that 
the second de Rham cohomology group decomposes as the direct sum of the $J$-anti-invariant and $J$-invariant cohomology groups. In \cite{DLZ1}, Dr\v{a}ghici, Li and Zhang proved that an almost complex 
structure on a compact $4$-dimensional manifold is $\hbox{\rm \Cpf}$.
\newline In \cite{DLZ2} and \cite{DLZ3}, the same authors continue the study of the $J$-anti-invariant cohomology of an almost complex manifold $(M,J)$. {Let $h^-_J$ be the dimension of the real vector space of closed anti-invariant $2$-forms on $(M,J)$. Note that in the case when the manifold is $4$-dimensional every closed anti-invariant form $\alpha$ is $\Delta_{g_J}$-harmonic, where $g_J$ is a Hermitian metric and $\Delta_{g_J}$ denotes the Hodge Lapacian, see section \ref{preliminaries}. Thus in the compact $4$ dimensional case $h^-_J$ is the dimension of the anti-invariant cohomology. The following conjectures appear in \cite{DLZ2}.} 
\vskip.1truecm\noindent
{\bf Conjecture 2.4}. {\em For generic almost complex structures $J$ on a compact $4$-manifold $M$, $h^-_J= 0$.}
\vskip.1truecm\noindent
In the case when $b^+ =1$ this was proved as Theorem 3.1 the same paper. The conjecture in general was established by Tan, Wang, Zhang and Zhu in \cite{twzz}.
\vskip.1truecm\noindent
{\bf Conjecture 2.5}. {\em On a compact $4$-manifold, if $h^-_J\geq 3$, then $J$ is integrable.}
\vskip.1truecm\noindent
By starting with a (compact) K\"ahler surface with holomorphically trivial canonical bundle, Dr\v{a}ghici, Li and Zhang obtain non integrable almost complex structures with $h^-_J=2$. More precisely, for a given (compact) K\"ahler surface $(M,J)$ with holomorphically trivial canonical bundle, they take a closed $2$-form trivializing the canonical bundle. Then, fixing a conformal class of Hermitian metrics compatible with $J$, they consider the Gauduchon metric representing such a conformal class and they associate an almost complex structure $J_{f,s,l}$ depending on three smooth functions satisfying some suitable conditions. Then, generically, $h^-_{J_{f,s,l}}=0$, but cases when $h^-_{J_{f,s,l}}=1$ and $h^-_{J_{f,s,l}}=2$ also occur. 
Therefore, again in \cite{DLZ2}, as an extension of Conjecture 2.5, the authors asked the following natural
\vskip.1truecm\noindent
{\bf Question 3.23}. {\em  
Are there (compact, $4$-dimensional) examples of non-integrable
almost complex structures $J$ with $h^{-}_J\geq 2$ other than the ones arising from
\cite{DLZ2}, Proposition 3.21? In particular, are there any examples with $h^-_J \geq 3$?}

\vskip.2truecm\noindent
For other results on $\hbox{\rm \Cpf}$ and $J$-anti-invariant closed forms see \cite{AT,ATZ,BZ,HMT1,LU}.\newline

In this note, motivated by Conjecture 2.5 and Question 3.23, we study the anti-invariant cohomology and the space of anti-invariant harmonic forms of an almost complex manifold, possibly non compact.

Starting with the non compact case, we first note that the space of closed anti-invariant forms with respect to the standard integrable complex structure $i$ on $\R^4 \equiv \C^2$ is infinite dimensional: indeed, for every given holomorphic function $h(z_1,z_2)$, the real and imaginary parts of $h(z_1, z_2)dz_1\wedge dz_2$ are closed and anti-invariant.

{As Theorem \ref{infinite-harmonic}, we show the same can also hold in the non integrable case.}

\vskip.1truecm\noindent

{{\bf Theorem.} {\em There exists a (non integrable) almost complex structure on $\R^4$, such that the space of closed $J$-anti-invariant forms is infinite dimensional.}}

\vskip.1truecm\noindent

{As a consequence, we see that compactness is essential for Conjecture 2.5.}

In contrast we also show the following (see Theorem \ref{limit}, and Lemma \ref{integrability-R4} for the integrability statement).

\vskip.1truecm\noindent

{{\bf Theorem.} {\em There exists a family of almost complex structures $\{J_f\}$ on $\C^2$, parameterized by smooth functions $f: \C^2 \to \R$, with the following properties. \begin{itemize} \item $J_f$ coincides with the standard complex structure $i$ exactly at points where $f=0$;
 \item $J_f$ is integrable if and only if the gradient of $f$ in the $z_2$ direction is $0$;
 \item if $f$ has compact support and $f \not\equiv 0$ then $h^-_{J_f}=1$.
\end{itemize}
}}

\vskip.1truecm\noindent

{In particular, an arbitrarily small, compactly supported, perturbation of a complex structure having an infinite dimensional space of anti-invariant forms may admit only a single such form up to scale. This provides supporting evidence for Conjecture 2.5, showing that typically anti-invariant forms do not persist under nonintegrable perturbations. }

{A similar argument gives the following, see Corollary \ref{dimension0}.}

\vskip.1truecm\noindent

{{\bf Corollary.} {\em There exist almost complex structures on $\C^2$ which agree with $i$ outside of a compact set and have $h^-_J =0$.}}

\vskip.1truecm\noindent

{We note that integrable complex structures on $\C^2$ which agree with $i$ outside of a compact set are biholomorphic to $\C^2$ itself, and so have $h^-_J = \infty$. This follows from Yau, \cite{yau}, Theorem 5, since such complex structures can be extended to give complex structures on $\C P^2$.}

{Given the original motivations  for studying anti-invariant cohomology groups it is natural to ask about compatibility properties for our almost complex structures. We point out in Remark \ref{almostkahler} that the almost complex structures described in both of the above theorems are indeed almost K\"{a}hler, that is, they are compatible with a symplectic form on $\C^2$.}

In the compact case, we construct a $2$-parameter family of (non integrable) almost complex structures on the Kodaira-Thurston manifold, depending on two smooth functions, for which the anti-invariant cohomology 
group has maximum dimension equal to $2$ (see Proposition \ref{kodaira}). This provides an affirmative answer to Question 3.23. In the last section, we give a 
simple construction to obtain $6$-dimensional compact almost complex manifolds with arbitrary large anti-invariant cohomology (see Proposition \ref{large-anti-invariant}). Hence dimension $4$ is also an essential part of Conjecture 2.5.

For almost-complex structure on a 4-manifold which are tamed by a symplectic form, Dr\v{a}ghici, Li and Zhang show in \cite{DLZ1}, Theorem 3.3, that $h^-_J \le b^+ -1$. Thus any counterexamples to Conjecture 2.5 cannot come from tame almost-complex structures on symplectic 4-manifolds with $b^+ \le 3$. Moreover T.-J. Li in \cite{TJL}, Theorem 1.1, shows that symplectic 4-manifolds of Kodaira dimension $0$ all have $b^+ \le 3$. We thank Weiyi Zhang for pointing this out.

\vskip.2truecm\indent
\noindent{\sl Acknowledgments.} The second author would like to thank the Math Department of Notre Dame University for its warm hospitality, and we both thank Tedi Dr\v{a}ghici for valuable remarks including pointing out an error in the formulas justifying Theorem \ref{infinite-harmonic}, and Weiyi Zhang for more comments and insight. We also thank an anonymous referee for helping to clarify the presentation and simplifying the statement of Theorem \ref{limit}.

\section{Anti-Invariant cohomology}\label{preliminaries}
In this Section we will fix some notation and recall the generalities on anti-invariant forms and some 
notion about the cohomology of almost complex manifolds. Let $M$ be a smooth $2n$-dimensional manifold. We
will denote by $J$ a smooth almost complex structure on $M$, that is a smooth $(1,1)$-tensor $J$ field 
satisfying $J^2=-\hbox{\rm id}$. The almost complex structure $J$ is said to be {\em integrable} if its 
Nijenhuis tensor, that is the $(1,2)$-tensor given by
$$
N_J(X,Y)=[JX,JY]-[X,Y]-J[JX,Y]-J[X,JY],
$$
According to Newlander-Nirenberg Theorem, $J$ is integrable if and only if $J$ is induced by a structure of complex manifold on $M$. 
Let $J$ be a smooth almost-complex structure on a $M$ and denote 
by $\Lambda^r(M)$ the bundle of $r$-forms on $M$; let $\Omega^r(M):=\Gamma(M,\Lambda^r(M))$ be 
the space of smooth global sections of $\Lambda^r(X)$ and let $\Lambda^r(M;\C)=\Lambda^r(M)\otimes\C$. 
Then $J$ acts in a natural way on the space $\Omega^r(M;\C)$ of smooth sections of 
$\Lambda^r(M;\C)$ giving rise to the following bundle decomposition
$$
\Lambda^r(M;\C)=\bigoplus_{p+q=r}\Lambda^{p,q}_J(M).
$$
Accordingly, $\Omega^r(M;\C)$ and $\Omega^r(M)$ decompose respectively as 
$$
\Omega^r(M;\C)=\bigoplus_{p+q=r}\Omega^{p,q}_J(M).
$$
and
$$
\Omega^r(M)=\bigoplus_{p+q=r,\,p\leq q}\Omega^{(p,q),(q,p)}(M)_\R,
$$
where, for $p <q$ 
$$
\Omega^{(p,q),(q,p)}(M)_\R=\{\alpha\in\Omega^{p,q}_J(M)\oplus \Omega^{q,p}_J(M)\,\,\,\vert\,\,\, 
\alpha=\overline{\alpha}\}
$$
and
$$
\Omega^{(p,p)}(M)_\R=\{\beta\in\Omega^{p,p}_J(M)\,\,\,\vert\,\,\, 
\beta=\overline{\beta}\}
$$
In particular for $r=2$, $J$ acts as involution on   
$\Omega^2(M)$ by 
$$J\alpha(X,Y)=\alpha(JX,JY),
$$ 
for every pair of vector fields $X$, $Y$ on $M$. Then we denote as usual by 
$\Lambda^-_J(M)$ (respectively $\Lambda^+_J(M)$) the 
$+1$ (resp. $-1$)-eigenbundle; then the space of corresponding sections $\Omega^-_J(M)$ (respectively 
$\Omega^+_J(M)$)
are defined to be the spaces of $J$-{\em anti-invariant}, (respectively 
$J$-{\em invariant}) forms, i.e., 
$$
\Omega_J^\pm(M)=\{\alpha\in \Omega^2(M)\,\,\,\vert\,\,\,J\alpha=\pm \alpha\}
$$  
$$
\Omega^{(2,0),(0,2)}(M)_\R=\Omega^-_J(M),\quad
\Omega^{1,1}(M)_\R=\Omega^+_J(M)
$$
Let 
$$
\mathcal{Z}^\pm_J(M)=\mathcal{Z}^2(M)\cap \Omega^\pm_J(M)=\{\alpha\in \Omega^\pm_J(M)\,\,\,\vert\,\,\,d\alpha =0\}.
$$
If $\{\varphi^1,\ldots,\varphi^n\}$ is a local coframe of $(1,0)$-forms on $(M,J)$, then $\Lambda^-_J(M)$ is locally spanned by
$$
\{
\hbox{\rm Re}(\varphi^r\wedge\varphi^s), \quad \hbox{\rm Im}(\varphi^r\wedge\varphi^s),\,\, 1\leq r<s \leq n 
\}.
$$
Then, according to the previous decomposition on forms, T.-J. Li and W. Zhang \cite{LZ} defined 
the following cohomology spaces 
$$
H^\pm_J (X)= \left\{ \mathfrak{a} \in H^2_{dR}(X;\R) \,\,\, \vert \, \,\,\exists\,\alpha \in {\mathcal Z}^{\pm}_J\,\,\vert\,\, \mathfrak{a}=[\alpha]\right\} 
$$
and they gave the following (see \cite[Definition 4.12]{LZ})
\begin{definition}
An almost complex structure $J$ on $M$ is said to be 
\begin{enumerate}
\item[$\bullet$] {\em\Cp}\ if
$$
H^{+}_J (M) \cap H^{-}_J (M)=\{0\}\,.
$$
\item[$\bullet$] {\em\Cf}\ if
$$
H^{2}_{dR} (M;\R) =H^{+}_J (M) + H^{-}_J (M).
$$
\item[$\bullet$] {\em\Cpf}\  if
$$
H^{2}_{dR} (M;\R) =H^{+}_J (M) \oplus H^{-}_J (M).
$$
\end{enumerate}
\end{definition}
Given an almost complex manifold $(M,J)$, {we denote by 
$$
h^-_J(M)=\dim_\R \mathcal{Z}^-_J(M).
$$

For a given Hermitian metric $g_J$ on the $2n$-dimensional almost complex manifold $(M,J)$, we will denote by $\mathcal{H}_J^-(M)$ the space of $J$-anti-invariant harmonic $2$-forms, that is
$$
\mathcal{H}_J^-(M):=\{\alpha\in\Omega^-(M)\,\,\,\vert\,\,\,\Delta_{g_J}\alpha=0\},
$$
where $\Delta_{g_J}$ denotes the Hodge Laplacian.

{Following \cite{DLZ2}, \cite[Prop.2.4]{HMT2}, once a $J$-Hermitian metric $g_J$ is fixed, the space 
$\mathcal{Z}^-_J(M)$ is contained in the kernel of a second order
elliptic differential operator $\mathbb{E}$, that is $\mathcal{Z}^-_J(M) \subset \hbox{\rm Ker}\,\mathbb{E}$. Explicitly, 
$$
E\alpha= \Delta_{g_J}\alpha+\frac{1}{(n-2)!}d((\alpha\wedge d(\omega^{n-2}))),
$$
where $\omega$ is the fundamental form of $g_J$. Hence, if $M$ is a compact $2n$-dimensional almost complex manifold, then 
$\mathcal{Z}^-_J(M)$ has finite dimension. Also, in view of \cite{A}, assuming $M$ is connected, if $\alpha$ is any closed anti-invariant form vanishing to infinite order at some point $p$, then $\alpha=0$. }

{In the case when $2n=4$, then any $J$-anti-invariant closed form $\alpha$ on $(M,J)$ satisfies $\Delta_{g_J}\alpha = \mathbb{E}\alpha= 0$ and so $\mathcal{Z}^-_J(M) \subset \mathcal{H}_J^-(M)$.  Thus if $M$ is compact the natural map $\mathcal{Z}^-_J(M) \hookrightarrow H_J^-(M)$ is an isomorphism. This also holds for compact $M$ in higher dimensions provided that $J$ is compatible with a symplectic form, that is, $(M,J)$ is an almost K\"ahler manifold, see for example, \cite{DLZ2} or \cite[Proposition 2.2, Corollary 2.3]{HMT2}.}

{Finally, again in dimension $2n=4$, we can check that in fact $\mathcal{Z}^-_{J}(M)\subset \mathcal{H}^+_{g_J} \subset \mathcal{H}_J^-(M)$ where 
$\mathcal{H}^+_{g_J}$ is the space of self-dual harmonic forms. So in the compact case we have $h^-_J(M)\leq b^+(M)$. }

\section{Closed $J$-anti-invariant forms and an integrability condition}
Let $J$ be an almost complex structure on a $4$-dimensional manifold. Let $\omega\neq 0$ be a closed $J$-anti-invariant form on $M$. Then, according to \cite[Lemma 2.6]{DLZ1} (see also \cite[Prop. 2.6]{HMT2}) the zero set 
$\omega^{-1}(0)$ of $\omega$ has empty interior, so that $M\setminus\omega^{-1}(0)$ is open and dense. Since $M\setminus\omega^{-1}(0)$ coincides with the subset of $M$ where $\omega$ is non degenerate (see 
\cite[Lemma 2.6]{DLZ1} or \cite[Lemma 1.1]{HMT2}), we have the following
\begin{lemma}\label{symplectic}
Let $(M,J)$ be a $4$-dimensional almost complex manifold and $0\neq \omega\in\mathcal{Z}^-_J$. Then 
$\omega$ is a symplectic form on the open dense set $M\setminus\omega^{-1}(0)$.
\end{lemma}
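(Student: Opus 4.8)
The plan is to check the two defining properties of a symplectic form — closedness and everywhere non-degeneracy — on the open set $M\setminus\omega^{-1}(0)$. Closedness is assumed, so the content is the pointwise statement that $\omega_p$ is non-degenerate precisely when $\omega_p\neq0$, together with the topological fact that $\omega^{-1}(0)$ is nowhere dense.

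First I would dispose of the pointwise dichotomy by linear algebra. Since $\omega$ is anti-invariant it is a real section of $\Lambda^{2,0}_J(M)\oplus\Lambda^{0,2}_J(M)$, and in complex dimension two $\Lambda^{2,0}_J(M)$ is a complex line bundle: with respect to a local $(1,0)$-coframe $\{\varphi^1,\varphi^2\}$ we may write
$$
\omega = h\,\varphi^1\wedge\varphi^2 + \bar h\,\bar\varphi^1\wedge\bar\varphi^2
$$
for a smooth $\C$-valued function $h$. A one-line wedge computation gives $\omega\wedge\omega = 2|h|^2\,\varphi^1\wedge\varphi^2\wedge\bar\varphi^1\wedge\bar\varphi^2$, a nonvanishing real multiple of a volume form exactly where $h\neq0$. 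Since a $2$-form on a $4$-manifold is non-degenerate at a point iff its square is nonzero there, and since $\omega_p=0\iff h(p)=0$, this shows at once that $\omega_p$ is either zero or non-degenerate and that $M\setminus\omega^{-1}(0)$ is exactly the locus of non-degeneracy.

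The remaining and only analytically substantial step is to show $\omega^{-1}(0)$ has empty interior. Here I would invoke the fact recalled in Section~\ref{preliminaries}: a closed anti-invariant $2$-form on a $4$-manifold lies in $\Ker\mathbb{E}$ for the second-order elliptic operator $\mathbb{E}$, so Aronszajn's unique continuation theorem \cite{A} applies and $\omega$ cannot vanish on a nonempty open set without vanishing on the entire connected component — contradicting $\omega\neq0$. (Concretely, in the local frame above the equation $d\omega=0$ unwinds, through the structure equations of the coframe and the Nijenhuis torsion, into a first-order generalized Cauchy–Riemann equation $\overline{\partial}h + ah + b\bar h = 0$ for $h$, to which the Bers–Vekua similarity principle applies just as well.) Hence $M\setminus\omega^{-1}(0)$ is open and dense, and by the previous paragraph $\omega$ restricts there to a closed, non-degenerate $2$-form, i.e.\ a symplectic form, which is the assertion.

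I expect the density step to be the main obstacle: the dichotomy is formal, but excluding an open zero set is real PDE input — one needs the ellipticity of $\mathbb{E}$ (or the similarity principle for first-order elliptic systems), which is exactly what is black-boxed in the citations to \cite{DLZ1} and \cite{HMT2}.
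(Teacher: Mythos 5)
Your proof is correct and follows essentially the same route as the paper, which simply cites \cite{DLZ1} (Lemma 2.6) and \cite{HMT2} for the two facts you establish: the pointwise dichotomy ``zero or non-degenerate'' (your $\omega\wedge\omega=2|h|^2\,\varphi^1\wedge\varphi^2\wedge\bar\varphi^1\wedge\bar\varphi^2$ computation is exactly the content of that lemma) and the density of the non-vanishing locus via unique continuation for the elliptic operator recalled in Section~\ref{preliminaries}. The only caveat, shared with the paper's own statement, is that the unique continuation step tacitly assumes $M$ connected.
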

Let $J_0$ be the standard complex structure on the vector space $\C^n\simeq\R^{2n}$ induced by the multiplication by $i$, that is,
$$
J_0(z_1,\ldots,z_n)=(e^{i\frac{\pi}{2}}z_1,\ldots,e^{i\frac{\pi}{2}}z_n).
$$
Then, for every given real number $r$, define $J_0^r\in\hbox{\rm End}(\C^n)$, by setting
$$
J^r_0(z_1,\ldots,z_n)=(e^{i\frac{\pi}{2}r}z_1,\ldots,e^{i\frac{\pi}{2}r}z_n).
$$
Let now $J$ be any almost complex structure on the manifold $\C^n\simeq\R^{2n}$; then there exists 
$A:\R^{2n}\to \hbox{\rm GL}(2n,\R)$ such that $J$ is conjugated 
to the standard complex structure $J_0$, i.e., 
$$J_x=A(x)J_0A^{-1}(x).
$$
For $r=r(x)\in\R$, define 
$$
J_x^r:=A(x)J_0^rA^{-1}(x).
$$

Let $(M,J)$ be a $2n$-dimensional almost complex manifold and let $\omega\in\Omega^-_J(M)$.
{Let $\mathcal{U}$ be a coordinate neighbourhood. We can find $A(x)$ for $x \in  \mathcal{U}$ conjugating $J_x$ to $J_0$. Given a smooth function $r : M \to \R$ equal to $0$ outside of $\mathcal{U}$ we can
define a bilinear form $\theta^r$ on $M$ which agrees with $\omega$ outside of $\mathcal{U}$ by setting, at any given $x\in 
\mathcal{U}$,}
\begin{equation}\label{thetadef}
 \theta^r_x(v,w)=\omega_x(v,J^{r(x)}_xw),
\end{equation}
for every pair of tangent vectors $v$, $w$.
\begin{lemma}\label{theta}
The form $\theta^r$ is skew-symmetric and $J$-anti-invariant, that is, $\theta^r \in \Omega^-_J(M)$.
\end{lemma}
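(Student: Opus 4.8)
The plan is to reduce the statement to a pointwise identity for the bilinear form $\omega_x$ on $T_xM$, after first rewriting the endomorphism $J^r_x$ in a convenient form. Since $J^r_0$ is multiplication by $e^{i\pi r/2}=\cos(\pi r/2)+i\sin(\pi r/2)$ on each factor of $\C^n$, as a real endomorphism it equals $\cos(\pi r/2)\,\mathrm{id}+\sin(\pi r/2)\,J_0$; conjugating by $A(x)$ and using $J_x=A(x)J_0A(x)^{-1}$ gives
$$
J^{r}_x=\cos\!\Big(\tfrac{\pi r(x)}{2}\Big)\,\mathrm{id}+\sin\!\Big(\tfrac{\pi r(x)}{2}\Big)\,J_x .
$$
In particular $J^r_x$ commutes with $J_x$, it depends smoothly on $x\in\mathcal{U}$, and it equals $\mathrm{id}$ off $\mathcal{U}$; hence $\theta^r$ defined by \eqref{thetadef} is a smooth $2$-tensor that agrees with $\omega$ outside $\mathcal{U}$.

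For $J$-anti-invariance I would use only that $J^r_x$ commutes with $J_x$ and the anti-invariance relation $\omega_x(J_xa,J_xb)=-\omega_x(a,b)$ coming from $\omega\in\Omega^-_J(M)$: for all $v,w\in T_xM$,
$$
\theta^r_x(J_xv,J_xw)=\omega_x\big(J_xv,J^r_x(J_xw)\big)=\omega_x\big(J_xv,J_x(J^r_xw)\big)=-\,\omega_x(v,J^r_xw)=-\,\theta^r_x(v,w).
$$
For skew-symmetry, write $\theta^r_x=\cos(\tfrac{\pi r(x)}{2})\,\omega_x+\sin(\tfrac{\pi r(x)}{2})\,\beta_x$ with $\beta_x(v,w):=\omega_x(v,J_xw)$, so it suffices that both $\omega_x$ and $\beta_x$ are skew-symmetric. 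The first is part of the hypothesis; for the second one first notes $\omega_x(J_xv,w)=\omega_x(v,J_xw)$ — apply the anti-invariance relation with $b$ replaced by $J_xw$ and use $J_x^2=-\mathrm{id}$ — and then $\beta_x(v,w)=\omega_x(v,J_xw)=\omega_x(J_xv,w)=-\omega_x(w,J_xv)=-\beta_x(w,v)$. Together with the previous display this yields $\theta^r\in\Omega^-_J(M)$.

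I do not expect a genuine obstacle here; once the formula for $J^r_x$ is in hand the rest is routine sign-chasing. It is worth noting that anti-invariance of $\theta^r$ does not use the specific shape of $J^r_x$ at all: for \emph{any} endomorphism $B$ of $T_xM$ commuting with $J_x$, the form $(v,w)\mapsto\omega_x(v,Bw)$ is automatically $J$-anti-invariant once $\omega$ is, and the decomposition $J^r_x=\cos(\cdot)\,\mathrm{id}+\sin(\cdot)\,J_x$ is needed only to reduce skew-symmetry to that of $\omega_x$ and of $\omega_x(\cdot,J_x\cdot)$.
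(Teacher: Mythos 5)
Your proof is correct, and for the skew-symmetry part it takes a genuinely different and arguably cleaner route than the paper. Both arguments establish anti-invariance the same way (via $J_xJ^r_x=J^{r+1}_x$, equivalently the commutation of $J^r_x$ with $J_x$, together with $J\omega=-\omega$). For skew-symmetry, however, the paper avoids writing $J^r_x$ explicitly: it differentiates in $r$, using $\frac{d}{dr}J^r=\frac{\pi}{2}J^{r+1}$, forms the auxiliary nonnegative function $f(r)=(\theta^r(v,w)+\theta^r(w,v))^2+(\theta^r(v,Jw)+\theta^r(Jw,v))^2$, and shows $f'\equiv 0$ with $f(0)=0$, so the symmetric part vanishes for all $r$. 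You instead observe the closed-form identity $J^r_x=\cos(\tfrac{\pi r}{2})\,\mathrm{id}+\sin(\tfrac{\pi r}{2})\,J_x$, which reduces everything to the skew-symmetry of $\omega$ and of $\omega(\cdot,J\cdot)$, the latter following from $\omega(Jv,w)=\omega(v,Jw)$. Your route buys two extra things essentially for free: it makes manifest that $J^r_x$ (hence $\theta^r$) is independent of the choice of conjugating map $A(x)$ and is smooth in $x$, and it identifies $\theta^r$ explicitly as $\cos(\tfrac{\pi r}{2})\,\omega+\sin(\tfrac{\pi r}{2})\,\theta$ with $\theta=\omega(\cdot,J\cdot)$, the form that reappears in the integrability proposition. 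The paper's ODE argument, on the other hand, is the kind of computation that would survive if one only knew the infinitesimal relation $\frac{d}{dr}J^r=\frac{\pi}{2}J^{r+1}$ rather than the explicit formula. No gaps in your argument.
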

\begin{proof}
For any given pair of tangent vectors $v$, $w$ at $x$,
$$
\begin{array}{l}
J_x\theta^r_x(v,w)=\theta^r_x(J_xv,J_xw)=\omega_x(J_xv,J^{r+1}_xw)=-\omega_x(v,J^{r}_xw)=-\theta^r_x(v,w), 
\end{array}
$$
that is $J\theta^r=-\theta^r$.

{Note that when $r=0$ we have $\theta^0 = \omega$ is skew.
To check $\theta^r$ is skew for all $r$, we fix $x$ (and so can think of $r$ as a real number) and working in $T_x M$ can choose a basis such that we can identify $J$ with the standard complex structure $J_0$ on $\C^n$. Then 
$$\frac{d}{dr}J^r = \frac{\pi}{2} J^{r+1}.
$$
Hence 
$$\frac{d}{dr} \theta^r(v,w) =\frac{d}{dr} \omega(v,J^r w)=\frac{\pi}{2}\omega(v,J^{r+1} w)=
\frac{\pi}{2} \theta(v,Jw).
$$}

For the same fixed $v$, $w$, we define a function $$f(r) = ( \theta^r(v,w) + \theta^r(w,v) )^2 + ( \theta^r(v, Jw) + \theta^r(Jw,v) )^2.$$

Then $$\frac{df}{dr} = \frac{\pi}{2}( \theta^r(v,w) + \theta^r(w,v) ) ( \theta^r(v,Jw) + \theta^r(w,Jv) ) $$ $$ + \frac{\pi}{2}( \theta^r(v, Jw) + \theta^r(Jw,v) )( - \theta(v, w) + \theta(Jw,Jv)) =0$$
using the fact that $J\theta^r = -\theta^r$. Hence $f'(r) = 0$ and since $f(0)=0$ we see that $f(r)=0$ for all $r$ and $\theta^r$ is skew for all $r$.

\end{proof}
The last Lemma allows to produce anti-invariant forms starting from an anti-invariant one. 
For the sake of completeness we recall the proof of an integrability result in the $4$-dimensional case obtained by 
Dr\v{a}ghici, Li and Zhang (see \cite[Lemma 2.12]{DLZ1}).
{\begin{prop}
Let $(M,J)$ be a $4$-dimensional almost complex manifold. Let $0 \neq \omega\in\mathcal{Z}^-_J(M)$. 
If the form $\theta_x(\cdot,\cdot)=\omega_x(\cdot,J_x\cdot)$ is closed, then $J$ is integrable.  
\end{prop}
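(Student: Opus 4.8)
The plan is to exploit the type decomposition $d=\mu+\partial+\overline{\partial}+\overline{\mu}$ and the observation that, in dimension $4$, the two hypotheses $d\omega=0$ and $d\theta=0$ are \emph{independent} conditions whose combination forces $\overline{\mu}$ to annihilate the canonical bundle wherever $\omega$ is nondegenerate.

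First I would work locally in a chart with a $(1,0)$-coframe $\{\varphi^1,\varphi^2\}$ and write the anti-invariant form as $\omega=\Psi+\overline{\Psi}$, where $\Psi=h\,\varphi^1\wedge\varphi^2\in\Omega^{2,0}_J(M)$ is its $(2,0)$-component; note $\omega$ vanishes exactly where $h$ does, so by Lemma \ref{symplectic} the set $U=M\setminus\omega^{-1}(0)=\{h\neq 0\}$ is open and dense. A pointwise computation (inserting $J$ in one argument multiplies a form of type $(2,0)$ by $i$ and one of type $(0,2)$ by $-i$) gives $\theta=i\Psi-i\overline{\Psi}$. Since there are no nonzero forms of type $(3,0)$ or $(4,-1)$ on a $4$-manifold, $d\Psi=\overline{\partial}\Psi+\overline{\mu}\Psi$ with $\overline{\partial}\Psi$ of type $(2,1)$ and $\overline{\mu}\Psi$ of type $(1,2)$. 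The $(2,1)$-component of $d\omega=d\Psi+d\overline{\Psi}=0$ reads $\overline{\partial}\Psi+\overline{\overline{\mu}\Psi}=0$, while the $(2,1)$-component of $d\theta=i\,d\Psi-i\,d\overline{\Psi}=0$ reads $\overline{\partial}\Psi-\overline{\overline{\mu}\Psi}=0$. Subtracting yields $\overline{\mu}\Psi=0$ (and adding yields $\overline{\partial}\Psi=0$), on all of $M$.

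Next I would restrict to $U$. Since $\overline{\mu}$ is $C^\infty(M)$-linear (it is a bundle-projection component of $d$ and kills functions), $\overline{\mu}\Psi=h\,\overline{\mu}(\varphi^1\wedge\varphi^2)=0$ forces $\overline{\mu}(\varphi^1\wedge\varphi^2)=0$ on $U$. Writing $\overline{\mu}(\varphi^1\wedge\varphi^2)=\overline{\mu}\varphi^1\wedge\varphi^2-\varphi^1\wedge\overline{\mu}\varphi^2$ and wedging with $\varphi^1$, the second term dies and one is left with $\varphi^1\wedge\varphi^2\wedge\overline{\mu}\varphi^1=0$; as $\overline{\mu}\varphi^1$ is a section of the line bundle $\Lambda^{0,2}_J$ and $\varphi^1\wedge\varphi^2\wedge(\cdot)$ is injective on it, $\overline{\mu}\varphi^1=0$, and symmetrically $\overline{\mu}\varphi^2=0$. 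Thus $\overline{\mu}$ vanishes on $1$-forms over $U$, equivalently the Nijenhuis tensor $N_J$ vanishes on $U$; by continuity of $N_J$ and density of $U$, $N_J\equiv 0$ on $M$, so $J$ is integrable by Newlander--Nirenberg. (Equivalently, $\overline{\partial}\Psi=\overline{\mu}\Psi=0$ says $d\Psi=0$, so on $U$ one has a closed nowhere-vanishing $(2,0)$-form; plugging a $(1,0)$-vector $W$ and a local frame $\overline{X},\overline{Y}$ of $T^{0,1}_J$ into $d\Psi=0$, and using that $\Psi$ annihilates every pair containing a $(0,1)$-vector, leaves only $\Psi([\overline{X},\overline{Y}],W)=0$ for all such $W$; since $\Psi$ is a nondegenerate alternating form on the rank-two bundle $T^{1,0}_J$, this gives $[\overline{X},\overline{Y}]\in T^{0,1}_J$, i.e. $T^{0,1}_J$ is involutive over $U$.)

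The main obstacle is really careful bookkeeping: getting the identity $\theta=i\Psi-i\overline{\Psi}$ with the correct sign and the signs of the type components of $d\Psi$ and $d\theta$ right, and then noticing that dimension $4$ is precisely what makes ``$\overline{\mu}$ kills $\Lambda^{2,0}_J$'' equivalent to ``$\overline{\mu}$ kills $\Lambda^{1,0}_J$'' — in higher dimensions $\overline{\mu}\Psi=0$ for a $(2,0)$-form $\Psi$ is strictly weaker than integrability.
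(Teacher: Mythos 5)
Your proof is correct and follows essentially the same route as the paper: both arguments produce a nowhere-vanishing closed $(2,0)$-form on the dense set where $\omega\neq 0$ (the paper's $\Psi=\omega-i\theta$ is exactly twice your $(2,0)$-component of $\omega$) and then conclude that $\overline{\mu}$ annihilates $(1,0)$-forms by wedging against it, so $N_J=0$ there and hence everywhere by density. The only difference is bookkeeping style --- you extract $\overline{\partial}\Psi=\overline{\mu}\Psi=0$ from the type components of $d\omega=d\theta=0$, while the paper verifies the type of $\Psi$ by direct evaluation on vectors and writes the last step as $0=d(\alpha\wedge\Psi)=(d\alpha)^{0,2}\wedge\Psi$.
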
}
\begin{proof}
{It suffices to check the Nijenhuis tensor $N_J=0$, at any point of the dense subset $M\setminus \omega^{-1}(0)$. This implies $N_J=0$ on the whole $M$ and $J$ is integrable.}

By Lemma \ref{symplectic} the $2$-form $\omega$ is a symplectic structure on $M\setminus \omega^{-1}(0)$. Let 
$x\in M\setminus \omega^{-1}(0)$ and $\mathcal{U}$ be a coordinate neighbourhood of $x$ contained in 
$M\setminus \omega^{-1}(0)$. Define a local complex $2$-form on $(M,J)$ by setting, for every $x\in\mathcal{U}$, 
$$
\Psi_x=\omega_x-i\theta_x.
$$
We show that $\Omega$ is of type $(2,0)$.
Indeed, for every given $v$, $w$, 
$$
\begin{array}{ll}
\Psi_x(v-iJv,w+iJw)&=(\omega_x-i\theta_x)(v-iJv,w+iJw)\\[5pt]
&= \omega_x(v,w)+\omega_x(Jv,Jw)-i\big(\theta_x(v,w)+\theta_x(Jv,Jw)\big)\\[5pt]
&+ i\big(\omega_x(v,Jw)-\omega_x(Jv,w)-i(\theta_x(v,Jw)-\theta_x(Jv,w))\big)\\[5pt]
&= 0,
\end{array}
$$
since $\omega$ and $\theta$ are $J$-anti-invariant. Therefore, $\Psi$ vanishes on any pair of 
complex vectors of type $(1,0)$, $(0,1)$, respectively, that is 
$$
\Psi\in\Omega^{2,0}_J(\mathcal{U})\oplus \Omega^{0,2}_J(\mathcal{U}).
$$
Similarly, 
$$
\begin{array}{ll}
\Psi_x(v+iJv,w+iJw)&=(\omega_x-i\theta_x)(v+iJv,w+iJw)\\[5pt]
&= \omega_x(v,w)-\omega_x(Jv,Jw)-i\big(\theta_x(v,w)-\theta_x(Jv,Jw)\big)\\[5pt]
&+ i\big(\omega_x(v,Jw)+\omega_x(Jv,w)-i\big(\theta_x(v,Jw)+\theta_x(Jv,w)\big)\big)\\[5pt]
&= 2\big(\omega_x(v,w)-i\theta_x(v,w)\big)+2i\big(\omega_x(v,Jw)-i\theta_x(v,Jw)\big)\\[5pt]
&= 2\big(\omega_x(v,w)-i\theta_x(v,w)\big)+2i\big(\theta_x(v,w)+i\omega_x(v,w)\big)\\[5pt]
&= 0.
\end{array}
$$
Therefore, $\Psi\in\Omega^{2,0}_J(\mathcal{U})$ is nowhere vanishing and closed. Let $\alpha$ be any local 
complex $(1,0)$-form. Then, by type reason, $\alpha\wedge\Psi=0$. Hence, at $x$,
$$
0=d(\alpha\wedge\Psi)=d\alpha\wedge\Psi=(d\alpha)^{0,2}\wedge\Psi,
$$
which implies that the $(0,2)$-part $(d\alpha)^{0,2}$ of $d\alpha$ vanishes and $N_J(x)=0$. \newline 

\end{proof}

Let $(x_1,x_2,y_1,y_2)$ be natural coordinates on $\R^4$ and $f=f(x_1,x_2,y_1,y_2)$ be a smooth $\R$-valued function on $\R^4$. Define $J_f\in\hbox{\rm End}(T\R^4)$ by setting 
\begin{equation}\label{almost-complex-structure}
J_f\frac{\partial}{\partial x_1}=f\frac{\partial}{\partial x_2}+\frac{\partial}{\partial y_1},\,
J_f\frac{\partial}{\partial x_2}=\frac{\partial}{\partial y_2},\,
J_f\frac{\partial}{\partial y_1}=-\frac{\partial}{\partial x_1}-f\frac{\partial}{\partial y_2},\,
J_f\frac{\partial}{\partial y_2}=-\frac{\partial}{\partial x_2}
\end{equation}
and extend it $\mathcal{C}^\infty(\R^4)$-linearly. Then $J_f$ gives rise to an almost complex strcuture on $\R^4$. 
\begin{lemma}\label{integrability-R4}
The almost complex structure $J=J_f$ is integrable if and only if 
$$
f_{x_2}=0,\quad f_{y_2}=0.
$$
\end{lemma}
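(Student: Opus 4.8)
The plan is to compute the Nijenhuis tensor $N_{J}$ of $J=J_{f}$ directly in the coordinate frame $\{\partial_{x_1},\partial_{x_2},\partial_{y_1},\partial_{y_2}\}$ and read off exactly when it vanishes identically. Since $N_{J}$ is tensorial and skew-symmetric, it is determined by its values on the six pairs of distinct basis vector fields, so the verification is a finite computation. The only non-constant ingredient of $J$ is the function $f$, which appears in $J\partial_{x_1}$ and $J\partial_{y_1}$; the Lie brackets of coordinate vector fields vanish, so every commutator appearing in $N_{J}(X,Y)=[JX,JY]-[X,Y]-J[JX,Y]-J[X,JY]$ reduces to brackets of the form $[g\,\partial_{x_k},\cdot]$ which produce only the derivatives $f_{x_1},f_{x_2},f_{y_1},f_{y_2}$ together with $J$ applied to coordinate fields.

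First I would record the action of $J$ on each basis field and note $J^{2}=-\mathrm{id}$ as a sanity check on \eqref{almost-complex-structure}. Then I would evaluate $N_{J}$ on the pairs involving $\partial_{x_1}$ and $\partial_{y_1}$ — namely $(\partial_{x_1},\partial_{x_2})$, $(\partial_{x_1},\partial_{y_1})$, $(\partial_{x_1},\partial_{y_2})$, $(\partial_{x_2},\partial_{y_1})$, $(\partial_{y_1},\partial_{y_2})$ — since the pair $(\partial_{x_2},\partial_{y_2})$ involves only $J$-images with constant coefficients and contributes nothing. For each such pair, expanding the brackets yields an expression linear in $f_{x_1},f_{x_2},f_{y_1},f_{y_2}$ with vector-field coefficients; collecting terms, I expect the coefficients of $f_{x_1}$ and $f_{y_1}$ to cancel identically (consistent with the fact that $f\equiv$ function of $x_1,y_1$ alone should be integrable, as it corresponds to a genuine complex structure), leaving precisely $f_{x_2}$ and $f_{y_2}$ as the obstructions. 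Setting $N_{J}\equiv 0$ then forces $f_{x_2}=f_{y_2}=0$, and conversely these two conditions make every computed component vanish.

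The main obstacle is purely bookkeeping: keeping track of which coordinate derivatives of $f$ arise from each bracket and then applying $J$ (which reintroduces $f$-dependent terms) without sign errors, so that the claimed cancellation of the $f_{x_1},f_{y_1}$ terms is verified cleanly. One efficient route is to observe that $\{\partial_{x_1}+f\,\partial_{x_2},\ \partial_{x_2}\}$ spans the image under some adjustment, or more concretely to note that $J_f$ differs from $J_0$ only by the term $f\partial_{x_2}$ in $J\partial_{x_1}$ and $-f\partial_{y_2}$ in $J\partial_{y_1}$; writing $J_f = J_0 + f\,B$ for a fixed nilpotent endomorphism $B$ and expanding $N_{J_f}$ bilinearly in this decomposition isolates the $df$-dependence and makes the vanishing of the $f_{x_1},f_{y_1}$ contributions transparent, since those directions are the ones along which $J_0$ is "holomorphic." Alternatively, and perhaps most cleanly, I would invoke the Proposition just proved: the form $\theta$ associated to a suitable anti-invariant $\omega$ (for instance $\omega = dx_1\wedge dy_2 - dx_2\wedge dy_1$ or a variant adapted to $J_f$) can be computed explicitly, and $d\theta=0$ is equivalent to the pair of equations $f_{x_2}=f_{y_2}=0$; this reduces the lemma to checking closedness of a single explicit $2$-form, a one-line differentiation, while the Proposition supplies the implication that closedness of $\theta$ forces integrability and the Nijenhuis computation supplies the converse.
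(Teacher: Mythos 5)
Your plan is correct and follows essentially the same route as the paper: a direct computation of the Nijenhuis tensor of $J_f$ in the coordinate frame, where only $f_{x_2}$ and $f_{y_2}$ survive (your expectation that the $f_{x_1}$, $f_{y_1}$ contributions cancel is borne out, e.g.\ $N_{J_f}(\partial_{x_1},\partial_{y_1})=-f\,N_{J_f}(\partial_{x_1},\partial_{x_2})$). The paper avoids the bookkeeping you are worried about by exploiting the antilinearity $N_J(JX,Y)=N_J(X,JY)=-JN_J(X,Y)$ together with the fact that $\partial_{x_1},\partial_{x_2},J\partial_{x_1},J\partial_{x_2}$ span $T\R^4$, so the single component $N_{J_f}(\partial_{x_1},\partial_{x_2})=-f_{y_2}\partial_{x_2}+f_{x_2}\partial_{y_2}$ determines the whole tensor and only one of your six pairs actually needs to be computed.
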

\begin{proof} It is enough to show that $N_J(\frac{\partial}{\partial x_1},\frac{\partial}{\partial x_2})=0$ if and only if 
$$
f_{x_2}=0,\quad f_{y_2}=0.
$$
We easily compute
$$
\begin{array}{ll}
N_J(\frac{\partial}{\partial x_1},\frac{\partial}{\partial x_2})&= 
[J\frac{\partial}{\partial x_1},J\frac{\partial}{\partial x_2}]-[\frac{\partial}{\partial x_1},\frac{\partial}{\partial x_2}]-J[J\frac{\partial}{\partial x_1},\frac{\partial}{\partial x_2}]
-J[\frac{\partial}{\partial x_1},J\frac{\partial}{\partial x_2}]\\[5pt]
&=[f\frac{\partial}{\partial x_2}+\frac{\partial}{\partial y_1},\frac{\partial}{\partial y_2}]
-J[f\frac{\partial}{\partial x_2}+\frac{\partial}{\partial y_1},\frac{\partial}{\partial x_2}]
-J[\frac{\partial}{\partial x_1},\frac{\partial}{\partial y_2}]\\[5pt]
&=-f_{y_2}\frac{\partial}{\partial x_2}+f_{x_2}\frac{\partial}{\partial y_2}
\end{array}
$$
Lemma is proved.
\end{proof}
According to the definition of $J_f$, the induced almost complex structure $J_f$ on $T^*\R^4$ is given by
\begin{equation}\label{dual-J}
J_fdx_1=-dy_1,\quad J_fdx_2=fdx_1-dy_2, \quad J_fdy_1=dx_1, \quad J_fdy_2=-fdy_1+dx_2.
\end{equation}
Consequently, setting
$$
\varphi^1=dx_1+idy_1,\quad \varphi^2=dx_2+i(-fdx_1+dy_2),
$$
then $\{\varphi^1,\varphi^2\}$ is a complex $(1,0)$-coframe on the almost complex manifold $(\R^4,J_f)$, so that 
$$
\beta=\hbox{\rm Re}(\varphi^1\wedge\varphi^2), \quad \gamma=\hbox{\rm Im}(\varphi^1\wedge\varphi^2),
$$
is a global frame of $\Lambda^-_{J_f}(\R^4)$. Explicitly,
\begin{equation}
\beta = dx_1\wedge dx_2-fdx_1\wedge dy_1- dy_1\wedge dy_2,\quad
\gamma = dx_1\wedge dy_2-dx_2\wedge dy_1.
\end{equation}
\begin{lemma}\label{first-lemma}
Let $\alpha$ be an arbitrary smooth section of $\Lambda^-_{J_f}(\R^4)$. Set 
$$
\alpha =a\beta+b\gamma
$$ 
for $a$, $b$ smooth $\R$-valued functions on $\R^4$. Then $d\alpha=0$ if and only if the following condition holds
\begin{equation}\label{PDE-system}
 \left\{
\begin{array}{lll}
a_{y_1}-b_{x_1}+(fa)_{x_2}&=& 0 \\[3pt]
a_{x_1}+b_{y_1}+(fa)_{y_2}&=& 0 \\[3pt]
a_{y_2}-b_{x_2}&=& 0 \\[3pt]
a_{x_2}+b_{y_2}&=& 0 
\end{array}
\right.
\end{equation}
\end{lemma}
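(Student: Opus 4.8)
The plan is to compute $d\alpha$ directly in the global coordinates $(x_1,x_2,y_1,y_2)$ and read off when it vanishes. Since the four $3$-forms
\[
dx_1\wedge dx_2\wedge dy_1,\quad dx_1\wedge dx_2\wedge dy_2,\quad dx_1\wedge dy_1\wedge dy_2,\quad dx_2\wedge dy_1\wedge dy_2
\]
form a pointwise basis of $\Lambda^3(\R^4)$, the condition $d\alpha = 0$ is equivalent to the vanishing of each of the four coefficient functions of $d\alpha$ in this basis, and the computation will show these coefficients are, up to sign, precisely the left-hand sides of \eqref{PDE-system}.

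First I would substitute the explicit expressions for $\beta$ and $\gamma$ to rewrite $\alpha = a\beta + b\gamma$ in the standard basis of $2$-forms,
\[
\alpha = a\,dx_1\wedge dx_2 - fa\,dx_1\wedge dy_1 + b\,dx_1\wedge dy_2 - b\,dx_2\wedge dy_1 - a\,dy_1\wedge dy_2 ,
\]
observing that $\gamma$ has constant coefficients (so $d\gamma = 0$) and that $-fa\,dx_1\wedge dy_1$ is the only term involving $f$. Then I would apply $d$ term by term, using $d(c\,\eta) = dc\wedge\eta$ for each constant-coefficient $2$-form $\eta$, permute each resulting $3$-form into standard orientation, and collect like terms. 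This yields
\begin{align*}
d\alpha &= \big(a_{y_1} - b_{x_1} + (fa)_{x_2}\big)\,dx_1\wedge dx_2\wedge dy_1 + \big(a_{y_2} - b_{x_2}\big)\,dx_1\wedge dx_2\wedge dy_2 \\
&\quad - \big(a_{x_1} + b_{y_1} + (fa)_{y_2}\big)\,dx_1\wedge dy_1\wedge dy_2 - \big(a_{x_2} + b_{y_2}\big)\,dx_2\wedge dy_1\wedge dy_2 ,
\end{align*}
and the equivalence of $d\alpha = 0$ with the system \eqref{PDE-system} is then immediate from the linear independence of the four basis $3$-forms.

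The computation is entirely routine; the only place an error can arise is the sign bookkeeping when one moves the exterior derivative past $dx_1\wedge dy_1$ and reorders $3$-forms such as $dx_2\wedge dx_1\wedge dy_1$ or $dy_2\wedge dx_1\wedge dy_1$ into standard order. There is no conceptual obstacle.
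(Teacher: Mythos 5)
Your proposal is correct and follows essentially the same route as the paper: expand $\alpha = a\beta+b\gamma$ (the paper writes $d\alpha = da\wedge\beta - a\,df\wedge dx_1\wedge dy_1 + db\wedge\gamma$, which is the same computation organized slightly differently), reorder into the standard basis of $3$-forms, and read off the four coefficients. Your final expression for $d\alpha$ agrees with the paper's, and the sign bookkeeping you flag as the only delicate point is indeed handled correctly.
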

\begin{proof}
Expanding $d\alpha$ we get:
$$
\begin{array}{ll}
d\alpha &= da\wedge\beta - a df\wedge dx_1\wedge dy_1 +db\wedge \gamma\\[5pt]
&= (a_{x_1}dx_1+a_{x_2}dx_2+a_{y_1}dy_1+a_{y_2}dy_2)\wedge (dx_1\wedge dx_2-fdx_1\wedge dy_1- dy_1\wedge dy_2)\\[5pt]
& -a(f_{x_1}dx_1+f_{x_2}dx_2+f_{y_1}dy_1+f_{y_2}dy_2)\wedge dx_1\wedge dy_1+\\[5pt]
& + (b_{x_1}dx_1+b_{x_2}dx_2+b_{y_1}dy_1+b_{y_2}dy_2)\wedge (dx_1\wedge dy_2-dx_2\wedge dy_1)\\[5pt]
&= -a_{x_1} dx_1\wedge dy_1\wedge dy_2 + a_{x_2}f dx_1\wedge dx_2\wedge dy_1 -
a_{x_2} dx_2\wedge dy_1\wedge dy_2+\\[5pt]
& +a_{y_1} dx_1\wedge dx_2\wedge dy_1 + a_{y_2} dx_1\wedge dx_2\wedge dy_2 -
a_{y_2}f dx_1\wedge dy_1\wedge dy_2\\[5pt]
& + af_{x_2} dx_1\wedge dx_2\wedge dy_1-af_{y_2} dx_1\wedge dy_1\wedge dy_2 - b_{x_1} dx_1\wedge dx_2\wedge dy_1+ \\[5pt] 
& - b_{x_2} dx_1\wedge dx_2\wedge dy_2  - b_{y_1} dx_1\wedge dy_1\wedge dy_2 - b_{y_2} dx_2\wedge dy_1\wedge dy_2\\[5pt]
&=(a_{y_1}-b_{x_1}+ (af)_{x_2})\,dx_1\wedge dx_2\wedge dy_1+(a_{y_2}-b_{x_2})\,dx_1\wedge dx_2\wedge dy_2+\\[5pt]
&- (a_{x_1}+b_{y_1}+ (af)_{y_2})\,dx_1\wedge dy_1\wedge dy_2-
(a_{x_2}+b_{y_2})\,dx_2\wedge dy_1\wedge dy_2.
\end{array}
$$
Therefore, $d\alpha=0$ if and only if \eqref{PDE-system} holds.
\end{proof}
\begin{rem} Set $z_1=x_1+iy_1,$ $z_2=x_2+iy_2$ and
$$
\begin{array}{ll}
 \partial_{z_1}=\frac12(\partial_{x_1}-i\partial_{y_1}),&
\partial_{z_2}=\frac12(\partial_{x_2}-i\partial_{y_2})\\[5pt]
\partial_{\overline{z}_1}=\frac12(\partial_{x_1}+i\partial_{y_1}),&
\partial_{\overline{z}_2}=\frac12(\partial_{x_2}+i\partial_{y_2})
\end{array}
$$
Then a pair of real valued functions $(a,b)$ on $\R^4$ is a solution of \eqref{PDE-system} 
if and only if the complex valued function $w=a-ib$ solves the following

\begin{equation}\label{PDE-system-complex-general}
 \left\{
\begin{array}{l}
\partial_{\overline{z}_1}w +\frac{i}{2}\partial_{z_2}(f(w+\overline{w}))= 0 \\[5pt]
\partial_{\overline{z}_2}w = 0 
\end{array}
\right.
\end{equation}

The system above is a perturbed Cauchy-Riemann PDEs system.
Furthermore, it is immediate to note that, condition \eqref{PDE-system} of Lemma \ref{first-lemma} can be rewritten as
$$
db=(a_{y_1}+(af)_{x_2})dx_1+ a_{y_2}dx_2- (a_{x_1}+(af)_{y_2})dy_1-a_{x_2}dy_2.
$$
Therefore, given $a$, there exists a $b$ such that $\alpha = a \beta + b \gamma$ is a closed $J$-anti-invariant form on $(\R^4,J)$ if and only if the differential form 
$$
(a_{y_1}+(af)_{x_2})dx_1+ a_{y_2}dx_2- (a_{x_1}+(af)_{y_2})dy_1-a_{x_2}dy_2
$$
is closed. The latter condition is equivalent to the following PDEs system:
\begin{equation}\label{system2}
\left\{
\begin{array}{lll}
 a_{x_1y_2}-a_{x_2y_1}-(af)_{x_2x_2}&=& 0 \\[3pt]
 a_{x_1y_2}-a_{x_2y_1}+(af)_{y_2y_2}&=& 0\\[3pt]
 a_{x_1x_1}+a_{y_1y_1}+(af)_{x_2y_1}+(af)_{x_1y_2}&=& 0 \\[3pt]
 a_{x_1x_2}+a_{y_1y_2}+(af)_{x_2y_2}&=& 0 \\[3pt]
 a_{x_2x_2}+a_{y_2y_2}&=& 0 
\end{array}
\right.
\end{equation}
\end{rem}

We are ready to state and prove the following 
\begin{theorem}\label{infinite-harmonic}
Let $f(x_1,x_2,y_1,y_2)=x_2$, $J=J_{x_2}$ be defined as in \eqref{almost-complex-structure} and $g_J$ be a $J$-Hermitian metric on $\R^4$. Let
$$
\beta = dx_1\wedge dx_2-fdx_1\wedge dy_1- dy_1\wedge dy_2,\quad
\gamma = dx_1\wedge dy_2-dx_2\wedge dy_1.
$$
Then
\begin{enumerate}
 \item[(I)] $J$ is a non-integrable almost complex structure on $\R^4$.\\[3pt]
 \item[(II)] For every given pair $(s,t)\in\R^2$, such that
 $$
 s^2+t^2+t=0,
 $$
 the form
 $$
 \alpha_{s,t}=te^{sx_1+ty_1}\beta - se^{sx_1+ty_1}\gamma
 $$
 
\end{enumerate}
is a $J$-anti-invariant and closed. Therefore, $\mathcal{H}^-_{J}(\R^4)$ has infinite dimension. 
\end{theorem}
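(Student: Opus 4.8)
The plan is the following. Statement (I) is immediate from Lemma \ref{integrability-R4}: for $f = x_2$ one has $f_{x_2} = 1 \neq 0$, so the integrability criterion of that lemma fails and $J = J_{x_2}$ is non-integrable.

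For (II) I would first note that anti-invariance is automatic: since $\{\beta,\gamma\}$ is a global frame of $\Lambda^-_{J}(\R^4)$ and the coefficients $t\,e^{sx_1+ty_1}$, $-s\,e^{sx_1+ty_1}$ are smooth, $\alpha_{s,t}$ is a smooth section of $\Lambda^-_{J}(\R^4)$. So only closedness requires work, and this I would extract from Lemma \ref{first-lemma} by checking that the pair $a = t\,e^{sx_1+ty_1}$, $b = -s\,e^{sx_1+ty_1}$ solves the system \eqref{PDE-system} with $f = x_2$. The verification is routine: because $a$ and $b$ do not depend on $x_2$ or $y_2$, the last two equations of \eqref{PDE-system} and the term $(fa)_{y_2}$ in the second equation vanish identically, the second equation reduces to $a_{x_1} + b_{y_1} = st\,e^{sx_1+ty_1} - st\,e^{sx_1+ty_1} = 0$, and, using $(fa)_{x_2} = (x_2\,a)_{x_2} = t\,e^{sx_1+ty_1}$, the first equation becomes $a_{y_1} - b_{x_1} + (fa)_{x_2} = (t^2+s^2+t)\,e^{sx_1+ty_1}$, which vanishes precisely by the hypothesis $s^2+t^2+t = 0$. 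Hence $\alpha_{s,t}$ is closed.

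To conclude that $\mathcal{H}^-_{J}(\R^4)$ is infinite-dimensional, I would observe that the solvability condition $s^2+t^2+t=0$, rewritten $s^2 + (t+\tfrac12)^2 = \tfrac14$, describes a circle in the $(s,t)$-plane, hence infinitely many parameters; its unique point with $t = 0$ is the origin, where $\alpha_{0,0}=0$, so every other point gives a form with $t\neq 0$. It then suffices to check that finitely many of these forms, say $\alpha_{s_1,t_1},\dots,\alpha_{s_N,t_N}$ with the $(s_k,t_k)$ distinct and all $t_k\neq 0$, are linearly independent over $\R$: if $\sum_k c_k\alpha_{s_k,t_k}=0$, then, since $\beta$ and $\gamma$ are pointwise independent, comparing $\beta$-coefficients gives $\sum_k c_k t_k\,e^{s_k x_1+t_k y_1}\equiv 0$ on $\R^4$, and distinctness of the exponents $(s_k,t_k)$ forces $c_k t_k = 0$ (restrict to a generic line in the $(x_1,y_1)$-plane and invoke linear independence of one-variable real exponentials via a Wronskian/Vandermonde argument), hence $c_k = 0$. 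Thus $\dim_\R\mathcal{Z}^-_{J}(\R^4)=\infty$; and since $\dim\R^4 = 4$, every closed $J$-anti-invariant form is $g_J$-harmonic for any choice of Hermitian metric $g_J$ (as recalled in Section \ref{preliminaries}), so $\mathcal{Z}^-_{J}(\R^4)\subseteq\mathcal{H}^-_{J}(\R^4)$ and the latter is infinite-dimensional.

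I do not expect a genuine obstacle here: once the ansatz $\alpha_{s,t}=e^{sx_1+ty_1}(t\beta-s\gamma)$ is in hand, the closedness check is mechanical, and the only mildly delicate points are that the linear-independence argument must use distinctness of the pairs $(s_k,t_k)$ (not of a single scalar parameter) and the remark that passing from $\mathcal{Z}^-_{J}$ to the a priori metric-dependent space $\mathcal{H}^-_{J}$ costs nothing in dimension four. The real content of the statement is the observation that the obstruction to closedness of the ansatz cuts out an entire curve of admissible $(s,t)$ rather than a discrete set.
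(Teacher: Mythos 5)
Your proposal is correct and follows essentially the same route as the paper: non-integrability via Lemma \ref{integrability-R4}, closedness of the ansatz $\alpha_{s,t}$ by plugging $a=te^{sx_1+ty_1}$, $b=-se^{sx_1+ty_1}$ into the system of Lemma \ref{first-lemma} (the paper does the identical computation in the equivalent complex form \eqref{PDE-system-complex}), and infinite-dimensionality from the circle of admissible $(s,t)$ together with linear independence of distinct exponentials and the dimension-four fact that closed anti-invariant forms are harmonic. Your explicit Wronskian/restriction argument for independence and the observation that $t=0$ only at the origin of the circle are details the paper leaves implicit, but they do not change the approach.
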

\begin{proof}
(I) In view of Lemma \ref{integrability-R4}, $J$ is integrable if an only if $f_{x_2}=f_{y_2}=0$. By assumption, we get $f_{x_2}=1$. Therefore $J$ is not integrable.\vskip.2truecm\noindent
(II) Set $z_1=x_1+iy_1,$ $z_2=x_2+iy_2$. Then, for $f = x_2$, the complex PDEs system \eqref{PDE-system-complex-general} becomes
\begin{equation}\label{PDE-system-complex}
 \left\{
\begin{array}{l}
\partial_{\overline{z}_1}w +\frac{i}{4}(z_2+\overline{z}_2)\partial_{z_2}w
+\frac{i}{4}(w+\overline{w})= 0 \\[5pt]
\partial_{\overline{z}_2}w = 0 
\end{array}
\right.
\end{equation}

A straightforward computation shows that, given any pair of real numbers $(s,t)$ satisfying 
$$
s^2+t^2+t=0,
$$
the complex function
$$
w = te^{s\frac{z_1-\bar{z}_1}{2}+t\frac{z_1-\bar{z}_1}{2i}}+ise^{s\frac{z_1-\bar{z}_1}{2}+t\frac{z_1-\bar{z}_1}{2i}}
$$
solves \eqref{PDE-system-complex}. Take 
$$
s_n=\frac{\sqrt{n-1}}{n},\qquad t_n=-\frac1n,
$$
Then, for such a choice, $s_n^2+t_n^2+t_n=0$. In view of the computations above, for any given integer $n\geq 1$, the $J$-anti-invariant forms
$$
\alpha_n:=t_ne^{s_nx_1+t_ny_1}\beta - s_ne^{s_nx_1+t_ny_1}\gamma
$$
are closed, and consequently $g_J$-harmonic. Therefore, $\{\alpha_n\}_{n\geq 1}$ is a sequence of harmonic forms on $(\R^4,J,g_J)$ and it is immediate to check that, for any given positive integer $m$, the forms $\{\alpha_1,\ldots, \alpha_m\}$ are linearly independent. This ends the proof.
\end{proof}

{Next we demonstrate the contrasting behavior when our almost complex structure is defined using functions with compact support.}

\begin{theorem}\label{limit}
{Let $f$ have compact support and the almost complex structures $J_f$ on $\C^2$ be defined by \eqref{almost-complex-structure}.}

{Then if $f$ is non-zero we have $h^-_{J_f}=1$.}
\end{theorem}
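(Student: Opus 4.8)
The plan is to establish the stronger statement $\mathcal{Z}^-_{J_f}(\C^2)=\R\,\gamma$, where $\gamma=dx_1\wedge dy_2-dx_2\wedge dy_1$. One inclusion is free: $\gamma$ is a nowhere‑zero section of $\Lambda^-_{J_f}$ by construction and has constant coefficients, so $d\gamma=0$, and hence $h^-_{J_f}\ge 1$ regardless of $f$. All the content is therefore in the reverse inclusion, namely that every closed anti‑invariant form is a constant multiple of $\gamma$. To prove it I would take an arbitrary $\alpha=a\beta+b\gamma\in\mathcal{Z}^-_{J_f}(\C^2)$ and pass to $w=a-ib$. By Lemma \ref{first-lemma} and the remark following it, $w$ solves the system \eqref{PDE-system-complex-general}; writing $u:=fa=\tfrac12 f(w+\overline{w})$, which is a \emph{compactly supported} smooth function precisely because $f$ is, this system becomes
$$
\partial_{\overline{z}_2}w=0,\qquad \partial_{\overline{z}_1}w=-i\,\partial_{z_2}u.
$$

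The key step is to differentiate the second equation by $\partial_{\overline{z}_2}$ and substitute the first: since $\partial_{\overline{z}_2}\partial_{\overline{z}_1}w=\partial_{\overline{z}_1}\partial_{\overline{z}_2}w=0$, this forces $\partial_{z_2}\partial_{\overline{z}_2}u=0$, i.e. $\tfrac14(\partial_{x_2}^2+\partial_{y_2}^2)u=0$, so for each fixed $z_1$ the slice $z_2\mapsto u(z_1,z_2)$ is harmonic on $\R^2$. Being compactly supported, each such slice vanishes identically by Liouville, so $u\equiv fa\equiv 0$ on $\C^2$. Feeding $\partial_{z_2}u=0$ back into the system leaves $\partial_{\overline{z}_1}w=\partial_{\overline{z}_2}w=0$, so $w$ is an entire holomorphic function on $\C^2$ whose real part $a$ vanishes on the open set $\{f\ne 0\}$, which is nonempty since $f\not\equiv 0$. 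As $a=\r w$ is real‑analytic and $\C^2$ is connected, $a\equiv 0$; then $w=ic$ for some $c\in\R$, so $b\equiv -c$ and $\alpha=-c\,\gamma$. This yields $\mathcal{Z}^-_{J_f}(\C^2)=\R\,\gamma$ and hence $h^-_{J_f}=1$.

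There is no single hard obstruction here once the cross‑differentiation trick is in hand; the point that deserves emphasis is that the slice‑Liouville argument is exactly where the compact support of $f$ enters. For the (non‑compactly supported) choice $f=x_2$ of Theorem \ref{infinite-harmonic} the identical computation only says that $x_2a$ is slice‑harmonic, which does not force $a\equiv 0$, consistent with the infinite‑dimensional conclusion there. The remaining ingredients — recasting closedness as \eqref{PDE-system-complex-general}, commuting the Wirtinger derivatives, and the identity theorem for the real‑analytic function $a=\r w$ — are routine.
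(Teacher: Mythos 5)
Your proof is correct. The decisive mechanism is the same as in the paper: both arguments first show that $fa$ is harmonic on each $(x_2,y_2)$-slice and then use the compact support of $f$ to conclude $fa\equiv 0$ (the paper gets slice-harmonicity from the first two equations of \eqref{system2}, you get it by applying $\partial_{\overline{z}_2}$ to the complex system \eqref{PDE-system-complex-general} and commuting derivatives). Where you diverge is the endgame. The paper stays with the real second-order system on $a$ alone and propagates the vanishing of $a$ in two stages: harmonicity in $(x_2,y_2)$ kills $a$ on the planes where $f$ is not identically zero, then harmonicity in $(x_1,y_1)$ (the third equation of \eqref{system2}, using $af=0$) spreads the vanishing to all of $\R^4$. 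You instead observe that once $fa\equiv 0$ the full first-order system collapses to $\partial_{\overline{z}_1}w=\partial_{\overline{z}_2}w=0$, so $w=a-ib$ is entire holomorphic on $\C^2$; a single application of the identity theorem to the real-analytic function $a=\r w$, which vanishes on the nonempty open set $\{f\neq 0\}$, then gives $a\equiv 0$ and forces $w$ to be an imaginary constant. This is a genuine (if modest) streamlining: it replaces the two-stage unique-continuation argument by one appeal to holomorphicity, and it has the added benefit of making transparent, as you note, exactly where compact support is used and why the argument fails for $f=x_2$ in Theorem \ref{infinite-harmonic}. Your observation that $b$ must then be constant, so that $\mathcal{Z}^-_{J_f}(\C^2)=\R\,\gamma$, matches the paper's conclusion.
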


{Note that since $f$ has compact support neither $f_{x_2}$ nor $f_{y_2}$ can vanish identically and so by Lemma \ref{integrability-R4} we see that $J_f$ is nonintegrable. As mentioned in the introduction, Yau's solution to the Calabi conjecture actually implies that no integrable complex structures $J$ can be standard outside of a compact set and satisfy $h^-_{J}=1$.}

\begin{proof}
We determine the anti-holomorphic forms by finding solutions to the system \eqref{system2}.

First note that the first two equations in \eqref{system2} imply that $af$ is a harmonic function of $x_2, y_2$, which is identically $0$ outside of a compact set (since $f$ is). Hence $af$ is identically $0$ everywhere.

Fix $x_1, y_1$, say $x_1=s, y_1=t$, so that $f$ does not vanish identically on the corresponding $x_2, y_2$ plane. Working in this plane, as $af$ is identically $0$ it follows that $a$ is identically $0$ on the open set where $f$ is nonzero. But the final equation in \eqref{system2} says that $a$ is also harmonic in $x_2, y_2$, hence $a$ vanishes identically on the whole plane, and similarly on all nearby $x_2, y_2$ planes.

Next we look at $x_1, y_1$ planes. As $af=0$ the third equation in \eqref{system2} says that $a$ is harmonic. But as we know that $a$ is $0$ close to $(s,t)$ we can conclude that $a=0$ everywhere.

Therefore the only closed anti-invariant forms $a \beta + b \gamma$ are of the form $a=0$ and $b$ constant, showing that $h^-_{J_f}=1$ as required.
\end{proof}

Similar almost complex structures give the following corollary.

\begin{cor}\label{dimension0} {There exist almost complex structures on $\C^2$ which agree with $i$ outside of a compact set and have $h^-_J =0$.}
\end{cor}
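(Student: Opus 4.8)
The plan is to superpose two copies of the construction behind Theorem~\ref{limit}, supported in disjoint balls, but arranged so that their two ``surviving'' closed anti-invariant forms are linearly independent; matching them on the overlap then forces the only closed anti-invariant form to vanish. Recall that the proof of Theorem~\ref{limit} shows that for $J_f$ with $f$ compactly supported and nonzero, every closed anti-invariant form equals $c\,\gamma$ for a real constant $c$, where $\gamma = dx_1\wedge dy_2 - dx_2\wedge dy_1$; since $\gamma$ is independent of $f$, this form is always present, so a single bump cannot push $h^-$ below $1$ and we genuinely need a second, ``rotated'' model.

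Let $\sigma\colon\C^2\to\C^2$ be the $\C$-linear (hence $i$-biholomorphic) Euclidean isometry $\sigma(z_1,z_2)=(z_1,iz_2)$, so that $\sigma$ carries balls to balls and $\sigma^*\gamma = \beta_0$, where $\beta_0 = dx_1\wedge dx_2 - dy_1\wedge dy_2$ is the $f=0$ specialization of the form $\beta$ from \eqref{almost-complex-structure}. For compactly supported nonzero $g$ the almost complex structure $\sigma_*J_g$ agrees with $i$ outside $\sigma(\mathrm{supp}\,g)$, and its closed anti-invariant forms are exactly $\sigma_*$ of those of $J_g$, hence the constant multiples of $\sigma_*\gamma = \pm\beta_0$. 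Now choose nonzero compactly supported $f,g$ and a translation $T$ (again an $i$-biholomorphic isometry) so that the balls $B_0\supset\mathrm{supp}\,f$ and $B_1\supset (T\circ\sigma)(\mathrm{supp}\,g)$ are disjoint, and define $J$ to equal $J_f$ on $B_0$, to equal $(T\circ\sigma)_*J_g$ on $B_1$, and to equal $i$ elsewhere. Since both local models already equal $i$ near the boundaries of their respective balls, $J$ is a smooth almost complex structure agreeing with $i$ outside $B_0\cup B_1$, and it is nonintegrable by Lemma~\ref{integrability-R4}.

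To see $h^-_J=0$, let $\alpha$ be a closed $J$-anti-invariant $2$-form. On the complement $U$ of a small closed ball around $B_1$ the structure $J$ coincides with $J_f$; moreover $U$ is connected and contains every $(x_1,y_1)$-plane and every $(x_2,y_2)$-plane on which $f$ is not identically zero, because $B_0$ and $B_1$ are far apart and a plane with a small disk removed is still connected. Hence the argument of Theorem~\ref{limit} (the system \eqref{system2}, harmonicity, unique continuation) applies on $U$ and gives $\alpha = c\,\gamma$ there for a constant $c$. Transporting by $(T\circ\sigma)^{-1}$ and repeating the argument with $g$ in place of $f$ gives, on the complement $V$ of a small closed ball around $B_0$, that $\alpha = c'\,\beta_0$ for a constant $c'$. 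On the nonempty connected open set $U\cap V$ we get $c\,\gamma = c'\,\beta_0$, and since $\gamma$ and $\beta_0$ are linearly independent constant $2$-forms this forces $c=c'=0$; thus $\alpha\equiv 0$ on $U\cup V = \C^2$. The one point needing care is that the proof of Theorem~\ref{limit} is being invoked on the complement of a small ball rather than on the complement of a compact set outside of which $J$ is standard: placing the two bumps far apart is exactly what makes the planes carrying the nontrivial part of $f$ (respectively $g$) lie entirely inside $U$ (respectively $V$), and removing a small ball from $\R^4$, or a small disk from a $2$-plane, leaves a connected set, so every step of that argument still goes through.
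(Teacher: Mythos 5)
Your proposal is correct, and its core architecture is exactly that of the paper: glue two compactly supported copies of the Theorem~\ref{limit} model, one of them transported by a $\C$-linear rotation so that its surviving closed anti-invariant form is $\beta_0=dx_1\wedge dx_2-dy_1\wedge dy_2$ rather than $\gamma$, and exploit the linear independence of $\gamma$ and $\beta_0$. (The paper uses $(z_1,z_2)\mapsto(z_2,-iz_1)$ where you use $(z_1,z_2)\mapsto(z_1,iz_2)$; both send $\gamma$ to $\pm\beta_0$.) Where you genuinely diverge is the globalization at the end. The paper localizes to the small product regions $\{|z_1-3|<1\}$ and $\{|z_2-3|<1\}$, on each of which the Theorem~\ref{limit} slice argument applies verbatim, concludes that $\alpha$ is simultaneously a multiple of $\gamma$ and of $\beta_0$ on their nonempty intersection and hence vanishes there, and then invokes Aronszajn's unique continuation theorem for the second-order elliptic operator $\mathbb{E}$ (recalled in Section~\ref{preliminaries}) to propagate the vanishing to all of $\C^2$. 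You instead run the slice-by-slice argument of Theorem~\ref{limit} on the complements $U$, $V$ of slightly enlarged balls around the two bumps, obtaining $\alpha=c\,\gamma$ on $U$ and $\alpha=c'\,\beta_0$ on $V$ with $U\cup V=\C^2$, so the conclusion follows from $c=c'=0$ on $U\cap V$ with no appeal to Aronszajn. This is a valid and more elementary finish, but it is the step that needs the care you flag: the $(x_1,y_1)$- and $(x_2,y_2)$-slices of $U$ may be planes with a closed disk removed, so you are using unique continuation for harmonic (hence real-analytic) functions on a connected planar domain vanishing on an open subset, and you must choose the two balls so that, for every slice, the open set on which $a$ is already known to vanish actually meets that slice outside the removed disk (e.g.\ by taking the balls with disjoint $z_1$- and $z_2$-projections). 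With that configuration pinned down, every step goes through; the paper's choice of small product regions is precisely a way of sidestepping these punctured-plane issues at the cost of quoting the elliptic unique continuation theorem at the end.
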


\begin{proof} {The proof of Theorem \ref{limit} implies that if $J = J_f$ on some region, say $\{ |z_1 -3| < 1\}$ and $f$ is not identically $0$ on the planes $\{z_1 =c\}$ when $|c-3|<1$ then any closed anti-invariant form on $\{ |z_1 -3| < 1\}$ is a multiple of $\gamma$. We fix such an $f$ with support in a ball $B_2(3,0)$ about $(3,0)$ of radius $2$.}

{Consider the mapping $T:\C^2 \to \C^2$, $(z_1, z_2) \mapsto (z_2, -iz_1)$, which takes $\{|z_2 - 3| <1\}$ to $\{ |z_1 -3| < 1\}$. Then $\rho = T^* \gamma = dx_1 \wedge dx_2 - dy_1 \wedge dy_2$ and $J' = T^* J_f$ coincides with $i$ outside of a ball about $B_2(0,3)$. Also, any closed $J'$-anti-invariant form on $\{|z_2 - 3| <1\}$ is a multiple of $\rho$ on $\{|z_2 - 3| <1\}$.}

{Now, both $J$ and $J'$ agree with $i$ outside of the two balls, and so we can find an almost complex structure $J''$ agreeing with $J$ on $B_2(3,0)$ and $J'$ on $B_2(0,3)$ and $i$ away from the two balls. Any corresponding $J''$ anti-invariant form is a multiple of both $\gamma$ and $\rho$ on $\{ |z_1 -3| < 1, |z_2 - 3| <1\}$ and so is equal to $0$ on this region. Hence by unique continuation, see section \ref{preliminaries}, the form must be identically $0$ everywhere.}

\end{proof}

{We conclude this section with a remark about the compatibility of our almost complex structures with symplectic forms.}

\begin{rem}\label{almostkahler}
The almost complex structures referred to in Theorems \ref{infinite-harmonic} and \ref{limit} are almost K\"{a}hler, that is, they are compatible with symplectic forms on $\C^2$. In the case when $f = f(x_1, x_2)$ we can check directly that $J_f$ is compatible with the symplectic form $$\omega_f = dx_1 \wedge dy_1 + dx_2 \wedge dy_2 + f dx_1 \wedge dx_2.$$

In the case when $f$ has compact support the almost complex structure $J_f$ is tamed by $$\omega_K = K dx_1 \wedge dy_1 + dx_2 \wedge dy_2$$ for a sufficiently large constant $K$. This means that $\omega_K (v, J_f v) \ge 0$ with equality only if $v=0$. It then follows from Gromov's theory of pseudoholomorphic curves, \cite{gr}, see also \cite{taubes} for this application, that $J_f$ is in fact compatible with a symplectic form $\omega_c$.

{Standard methods in symplectic geometry, see \cite{moser}, can be used to show that $\omega_f$ and $\omega_c$ are diffeomorphic to the standard symplectic form $\omega_0 = dx_1 \wedge dy_1 + dx_2 \wedge dy_2$, and in fact the diffeomorphisms can be chosen smoothly with $f$. Hence in both theorems we may assume without loss of generality that our almost complex structures are all compatible with $\omega_0$.}

\end{rem}

\section{Families of non-integrable almost complex structures with $h^-_J=2$ on the Kodaira-Thurston manifold} We will recall the construction of the {\em Kodaira-Thurston manifold}. 
Let $\R^4$ be the Euclidean space with coordinate $(x_1,\ldots, x_4)$ endowed with 
the following product $*$: given any $a=(x_1,\ldots,x_4),y=(y_1\ldots, y_4)\in\R^4$, define
$$
x*y=(x_1+y_1,x_2+y_2, x_3+x_1y_2+y_3,x_4+y_4).
$$
Then $(\R^4,*)$ is a nilpotent Lie group and 
$$
\Gamma=\{(\gamma_1,\ldots,\gamma_4)\in\R^4\,\,\,\vert\,\,\, \gamma_j\in\Z, j=1,\ldots,4\}
$$
is a uniform discrete subgroup of $(\R^4,*)$, so that 
$M=\Gamma\backslash\R^4$ is a $4$-dimensional compact manifold. Setting, 
$$
E^1=dx_1,\quad E^2=dx_2,\quad E^3=dx_3-x_1dx_2,\quad E^4=dx_4,\,
$$
then it is immediate to check that $
E^1,E^2,E^3,E^4$ are $\Gamma$-invariant $1$-forms on $\R^4$, and, consequently, they give rise to a 
gobal coframe on $M$. Then the following structure equations hold
$$
dE^1=0,\qquad dE^2=0,\qquad dE^3=-E^1\wedge E^2,\qquad dE^4=0.
$$
Denoting by $\{E_1,\ldots,E_4\}$ the dual global frame on $M$, then
$$
[E_1,E_2]=E_3,
$$
the other brackets vanishing. Let $\lambda=\lambda(x_4)$, $\mu=\mu(x_4)$ be non constant $\R$-valued 
smooth $\Z$-periodic functions. Define an almost complex structure $J=J_{\lambda,\mu}$ on $M$ by setting
\begin{equation}\label{almost-complex-kodaira}
 JE_1=e^{\lambda(x_4)}E_2,\, JE_2=-e^{-\lambda(x_4)}E_1, JE_3=e^{\mu(x_4)}E_4,
 \, JE_4=-e^{-\mu(x_4)}E_3.
\end{equation}
\begin{lemma}
The almost complex structure $J$ is non integrable. 
\end{lemma}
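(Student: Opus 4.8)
The plan is to show the Nijenhuis tensor $N_J$ does not vanish by evaluating it on a convenient pair of frame vectors, just as in the proof of Lemma \ref{integrability-R4}. Since $J = J_{\lambda,\mu}$ is defined on the frame $\{E_1,E_2,E_3,E_4\}$ whose only nontrivial bracket is $[E_1,E_2]=E_3$, the natural first choice is to compute $N_J(E_1,E_2)$; if this happens to vanish one falls back on $N_J(E_1,E_4)$ or $N_J(E_2,E_4)$, since the functions $\lambda,\mu$ depend on $x_4$ and $E_4 = \partial/\partial x_4$ acts on them by differentiation, which is precisely where nonintegrability should surface.

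Concretely, I would first record the derivative actions: $E_4(\lambda) = \lambda'(x_4)$ and $E_4(\mu)=\mu'(x_4)$, while $E_1,E_2,E_3$ annihilate both $\lambda$ and $\mu$. Then compute $JE_1 = e^{\lambda}E_2$, etc., and expand
$$
N_J(E_1,E_2) = [JE_1,JE_2]-[E_1,E_2]-J[JE_1,E_2]-J[E_1,JE_2]
$$
using the Leibniz rule $[fX,gY] = fg[X,Y] + f(Xg)Y - g(Yf)X$ together with $[E_1,E_2]=E_3$ and the vanishing of the other brackets. Here $E_1(e^{\lambda})=E_2(e^{\lambda})=0$, so many terms drop; one is left with something proportional to $E_3$ and $E_4$ with coefficients built from $e^{\pm\lambda}$, $e^{\pm\mu}$. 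Should this combination vanish identically, I would instead expand $N_J(E_1,E_4)$: since $[E_1,E_4]=0$ but $E_4(e^{-\lambda}) = -\lambda' e^{-\lambda} \neq 0$ (as $\lambda$ is non-constant), the term $-J[E_1,JE_4] = -J[E_1,-e^{-\mu}E_3] = e^{-\mu}J[E_1,E_3] + \cdots$ and the term $[JE_1,JE_4] = [e^{\lambda}E_2, -e^{-\mu}E_3]$ produce contributions involving $[E_2,E_3]=0$ but also the derivative $E_4(e^{\lambda})=\lambda' e^{\lambda}$ appearing through $-J[JE_1,E_4] = -J[e^{\lambda}E_2,E_4] = -J(-\lambda' e^{\lambda}E_2) = \lambda' e^{\lambda} JE_2 = -\lambda' E_1$, which is manifestly nonzero wherever $\lambda'\neq 0$.

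The main obstacle is purely bookkeeping: keeping track of which bracket terms survive and assembling the coefficients correctly, since $J$ acts by non-constant scalars on the frame and the Leibniz rule generates derivative terms $E_i(e^{\pm\lambda}), E_i(e^{\pm\mu})$ that must be handled carefully. There is no conceptual difficulty — once one of the $N_J(E_i,E_j)$ is seen to be a nonzero multiple of a frame vector at points where $\lambda'(x_4)\neq 0$ (such points exist since $\lambda$ is non-constant and smooth), we conclude $J$ is not integrable. I would present the computation of whichever $N_J(E_i,E_j)$ gives the cleanest nonzero answer, state the explicit formula, and note that it is nonzero precisely because $\lambda$ (or $\mu$) is non-constant, which finishes the proof.
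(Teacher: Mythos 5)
Your proposal is correct and follows essentially the same strategy as the paper: evaluate the Nijenhuis tensor on a pair of frame vectors and observe that a derivative term $\lambda'(x_4)$ survives because $\lambda$ is non-constant (the paper uses $N_J(E_1,E_3)=-e^{\lambda+\mu}\lambda' E_2$, while your fallback computation $N_J(E_1,E_4)=-\lambda' E_1$ is equally valid, and your observation that $N_J(E_1,E_2)=0$ is also right). The decisive term you isolate, $-J[JE_1,E_4]=-\lambda' E_1$, is computed correctly, so the argument goes through.
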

\begin{proof}
We compute
$$
\begin{array}{ll}
N_J(E_1,E_3)&= 
[JE_1,JE_3]-[E_1,E_3]-J[JE_1,E_3]
-J[E_1,JE_3]\\[5pt]
&=[e^{\lambda(x_4)}E_2,e^{\mu(x_4)}E_4]
-J[e^{\lambda(x_4)}E_2,E_3]
-J[E_1,e^{\mu(x_4)}E_4]\\[5pt]
&=-E_4(e^{\lambda(x_4)})E_2=-e^{\lambda(x_4)}\lambda'(x_4)E_2\neq 0
\end{array}
$$
\end{proof}
\begin{prop}\label{kodaira}
 Let $J=J_{\lambda,\mu}$ be the family of the (non invariant) almost complex structures on the Kodaira-Thurston manifold 
 defined as in \eqref{almost-complex-kodaira}. Then $h_J^-(M)=2$.
\end{prop}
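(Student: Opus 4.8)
The plan is to produce two $\R$-linearly independent closed $J$-anti-invariant $2$-forms on $M$ and to combine this with the inequality $h^-_J(M)\le b^+(M)$ recorded in Section \ref{preliminaries}. First I would set up a global $(1,0)$-coframe. From \eqref{almost-complex-kodaira} the space of $(0,1)$-vectors is spanned by $E_1+ie^{\lambda}E_2$ and $E_3+ie^{\mu}E_4$, so
$$
\varphi^1 = E^1 + ie^{-\lambda}E^2, \qquad \varphi^2 = E^3 + ie^{-\mu}E^4
$$
is a $(1,0)$-coframe on $(M,J)$, globally defined because the $E^i$ are $\Gamma$-invariant and $\lambda,\mu$ are $\Z$-periodic. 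Writing $\varphi^1\wedge\varphi^2 = \beta + i\gamma$ then gives the global frame of $\Lambda^-_J(M)$
$$
\beta = E^1\wedge E^3 - e^{-\lambda-\mu}\,E^2\wedge E^4, \qquad \gamma = e^{-\mu}\,E^1\wedge E^4 + e^{-\lambda}\,E^2\wedge E^3,
$$
so that every $J$-anti-invariant $2$-form is $a\beta + b\gamma$ for smooth functions $a,b$ on $M$.

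For the lower bound I would verify directly, using $dE^1=dE^2=dE^4=0$, $dE^3=-E^1\wedge E^2$ and $d(e^{c(x_4)})=c'e^{c}\,E^4$, that $\beta$ and $e^{\lambda(x_4)}\gamma$ are both closed: $d\beta=0$ is immediate, and since a short computation gives $d\gamma = -\lambda'e^{-\lambda}\,E^2\wedge E^3\wedge E^4$ together with $E^4\wedge\gamma = e^{-\lambda}\,E^2\wedge E^3\wedge E^4$, one obtains $d(e^{\lambda}\gamma) = \lambda'e^{\lambda}(E^4\wedge\gamma) + e^{\lambda}\,d\gamma = 0$. Both forms lie in $\Omega^-_J(M)$ and are $\R$-linearly independent, since $\{\beta,\gamma\}$ is pointwise independent and $e^{\lambda}$ is nowhere zero; hence $h^-_J(M)\ge 2$. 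For the upper bound, the Kodaira--Thurston manifold is a nilmanifold, hence parallelizable with vanishing signature, and $b_2(M)=4$ (computed on invariant forms from the structure equations, or quoted from Nomizu's theorem), so $b^+(M)=2$ and therefore $h^-_J(M)\le b^+(M)=2$. Combining the two bounds gives $h^-_J(M)=2$.

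The computations above are routine; the one substantive point is that $\gamma$ itself is \emph{not} closed — unavoidably, because $\lambda$ is non-constant, which is precisely the condition making $J$ non-integrable — so the correcting factor $e^{\lambda}$ must be spotted. If one wished to avoid the topological input, the alternative is to expand $d(a\beta+b\gamma)=0$ into a first-order PDE system for $(a,b)$ in the spirit of \eqref{PDE-system}; that system forces $E_4b-\lambda'b$ and the holomorphic-type derivatives of $a$ to vanish, and a periodicity argument on $M$ then identifies the solution space with the span of $(a,b)=(1,0)$ and $(a,b)=(0,e^{\lambda})$ — but invoking $h^-_J\le b^+$ is the shorter route and the one I would take.
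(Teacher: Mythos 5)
Your proposal is correct and follows essentially the same route as the paper: the same global $(1,0)$-coframe, the same frame $\{\beta,\gamma\}$ of $\Lambda^-_J(M)$, the observation that $\beta$ and $e^{\lambda(x_4)}\gamma$ (rather than $\gamma$ itself) are the closed anti-invariant forms, and the bound $h^-_J(M)\le b^+(M)=2$ to conclude. Your remarks justifying $b^+(M)=2$ and flagging the non-closedness of $\gamma$ are accurate refinements of the paper's argument, not deviations from it.
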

\begin{proof}
By the definition of $J$, the following 
$$
\psi^1=E^1+ie^{-\lambda(x_4)}E^2,\quad \psi^2=E^3+ie^{-\mu(x_4)}E^4
$$
is a global $(1,0)$-coframe on $(M,J)$. Then
$$
\theta^1=E^1\wedge E^3- e^{-(\lambda(x_4)+\mu(x_4))} E^2\wedge E^4,\quad
\theta^2=e^{-\mu(x_4)} E^1\wedge E^4+ e^{-\lambda(x_4)} E^2\wedge E^3
$$
globally span $\Lambda^-_J(M)$. We immediately obtain
$$
d\theta^1=0,\quad d(e^{\lambda(x_4)}\theta^2) =0, 
$$
that is $\theta^1$, $e^{\lambda(x_4)}\theta^2$ are closed $J$-anti-invariant forms, hence harmonic, which 
span $\Lambda^-_J(M)$. Since $b^+(M)=2$ and $h^-_J(M)\leq b^+(M)$ for every compact almost complex manifold, we conclude that $h^-_J(M)=2$ and 
$$
H^-_{J}(M)\simeq \hbox{\rm Span}_\R\langle\theta^1,e^{\lambda(x_4)}\theta^2 \rangle.
$$
\end{proof}
\begin{rem}
It should be noted that the two-parameter family of almost complex structures on the Kodaira surface as in Proposition \ref{kodaira} cannot be metric related to an integrable almost complex structure, as, on the contrary, in view of \cite[Proposition 3.20]{DLZ2}, such almost complex structures have $h^{-}_{J_{\lambda,\mu}}\leq 1$.
\end{rem}

\section{$6$-dimensional compact almost complex manifolds with arbitrarily large anti-invariant cohomology}
In this Section we provide simple examples of compact $6$-dimensional manifolds endowed with a non integrable almost complex structure with arbitrary large anti-invariant cohomology. \newline
Let $\Sigma_g$ be a compact Riemann surface of genus $g\geq 2$. On the differentiable product $X=\Sigma_g\times \Sigma_g$, denote by $J$ the complex product structure. Let $\T^2=\R^2\slash\Z^2$ be the real 
$2$-torus, where we indicate with $(t_1,t_2)$ global coordinates on $\R^2$ and let $f:X\to \R$ be a smooth positive non constant function. Let $M=X\times \T^2$. 
Define $\mathcal{J}\in\hbox{\rm End}(TM)$ by setting 
$$
\mathcal{J}(V,a\frac{\partial}{\partial t_1}+b\frac{\partial}{\partial t_2})
=(JV,-\frac{b}{f}\frac{\partial}{\partial t_1}+ fa\frac{\partial}{\partial t_2})
$$
Then, we have the following
\begin{prop}\label{large-anti-invariant}
$\mathcal{J}$ is a non integrable almost complex structure on $M=X\times \T^2$ such that 
$$
h_{\mathcal{J}}^-(M)\geq 2g^2.
$$
\end{prop}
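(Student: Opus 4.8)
The plan is to verify that $\mathcal{J}$ is an almost complex structure, to deduce its non-integrability from the non-constancy of $f$, and then to produce $2g^2$ linearly independent closed $\mathcal{J}$-anti-invariant $2$-forms on $M$ by pulling back holomorphic $(2,0)$-forms from $X = \Sigma_g \times \Sigma_g$. Applying the defining formula for $\mathcal{J}$ twice to a vector $(V, a\partial_{t_1} + b\partial_{t_2})$ returns $(-V, -a\partial_{t_1} - b\partial_{t_2})$, so $\mathcal{J}^2 = -\mathrm{id}$; positivity of $f$ is what makes $\mathcal{J}$ smooth and invertible. For non-integrability I would compute one component of the Nijenhuis tensor. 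Regarding a vector field $W$ on $X$ and the coordinate field $\partial_{t_1}$ as vector fields on $M$, one has $\mathcal{J}W = JW$ and $\mathcal{J}\partial_{t_1} = f\,\partial_{t_2}$; since $f$ is a function on $X$ and $\partial_{t_1}, \partial_{t_2}$ commute with vector fields on $X$, every bracket in $N_{\mathcal{J}}(W, \partial_{t_1})$ vanishes except those in which $W$ or $JW$ differentiates $f$, and a short computation gives
$$
N_{\mathcal{J}}(W, \partial_{t_1}) = \frac{Wf}{f}\,\partial_{t_1} + ((JW)f)\,\partial_{t_2}.
$$
As $f$ is non-constant, $df \neq 0$ somewhere; choosing $W$ with $Wf \neq 0$ at such a point shows $N_{\mathcal{J}} \not\equiv 0$, so $\mathcal{J}$ is not integrable.

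For the lower bound on $h^-_{\mathcal{J}}$, let $\{\omega_1, \dots, \omega_g\}$ be a basis of the holomorphic $1$-forms on $\Sigma_g$, let $p_1, p_2 : X \to \Sigma_g$ be the two projections, and set $\Theta_{ij} = p_1^*\omega_i \wedge p_2^*\omega_j$ for $1 \le i, j \le g$. Each $p_k^*\omega_\ell$ is closed, so the $\Theta_{ij}$ are $g^2$ closed forms of type $(2,0)$ for the product complex structure $J$ on $X$, and by the K\"unneth formula (or a direct period computation) they are $\C$-linearly independent. Let $\pi : M = X \times \T^2 \to X$ be the projection. Since $\mathcal{J}$ preserves the splitting $TM = TX \oplus T\T^2$ and restricts to $J$ on $TX$, we have $d\pi \circ \mathcal{J} = J \circ d\pi$, hence
$$
(\pi^*\Theta_{ij})(\mathcal{J}\,\cdot,\,\mathcal{J}\,\cdot) = \Theta_{ij}(J\,d\pi\,\cdot,\, J\,d\pi\,\cdot) = -(\pi^*\Theta_{ij})(\cdot,\cdot),
$$
using that $(2,0)$-forms are $J$-anti-invariant. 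Thus $\pi^*\Theta_{ij}$ is $\mathcal{J}$-anti-invariant, and since this property passes to real and imaginary parts, $\mathrm{Re}(\pi^*\Theta_{ij})$ and $\mathrm{Im}(\pi^*\Theta_{ij})$ are closed real $\mathcal{J}$-anti-invariant $2$-forms on $M$, i.e. elements of $\mathcal{Z}^-_{\mathcal{J}}(M)$.

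It remains to check that these $2g^2$ forms are $\R$-linearly independent. The complex span $V$ of the $\Theta_{ij}$ contains no nonzero purely imaginary form, since such a form would be simultaneously of type $(2,0)$ and $(0,2)$; hence $\eta \mapsto \mathrm{Re}\,\eta$ is injective on $V$. Consequently $\mathrm{Re}\,\Theta_{ij}$, $\mathrm{Im}\,\Theta_{ij}$ are $\R$-linearly independent on $X$: a real relation among them reads $\mathrm{Re}\,\Xi = 0$ with $\Xi = \sum (a_{ij} - i b_{ij}) \Theta_{ij} \in V$, forcing $\Xi = 0$ and then all $a_{ij} = b_{ij} = 0$. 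Since $\pi^*$ is injective on differential forms, the pulled-back forms stay independent, so $\mathcal{Z}^-_{\mathcal{J}}(M)$ contains a $2g^2$-dimensional subspace and $h^-_{\mathcal{J}}(M) \ge 2g^2$. The only steps requiring care are the term bookkeeping in the Nijenhuis calculation and the linear-independence argument; I do not anticipate a genuine obstacle in either.
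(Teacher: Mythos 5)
Your proposal is correct and follows essentially the same route as the paper: verify $\mathcal{J}^2=-\mathrm{id}$, detect non-integrability via one Nijenhuis component involving a derivative of $f$ (your formula $N_{\mathcal{J}}(W,\partial_{t_1})=\frac{Wf}{f}\partial_{t_1}+((JW)f)\partial_{t_2}$ agrees with the paper's computation for $W=\partial_{x_1}$), and obtain $2g^2$ independent closed anti-invariant forms from the real and imaginary parts of the products of holomorphic $1$-forms on $\Sigma_g\times\Sigma_g$ pulled back to $M$. Your write-up is in fact slightly more careful than the paper's on the pullback step and the $\R$-linear independence of the real and imaginary parts.
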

\begin{proof}
It is immediate to check that $\mathcal{J}^2=-\hbox{\rm id}$. Let $p\in X$ such that $df(p)\neq 0$ and let 
$(z_1=x_1+iy_1,z_2=x_2+iy_2)$
be local 
holomorphic coordinates on $X$ around $p$. We may assume that $\frac{\partial}{\partial z_1}f(p)\neq 0$. We have:
$$
\begin{array}{ll}
N_{\mathcal{J}}(\frac{\partial}{\partial x_1},\frac{\partial}{\partial t_1})&= 
[{\mathcal{J}}\frac{\partial}{\partial x_1},{\mathcal{J}}\frac{\partial}{\partial t_1}]-[\frac{\partial}{\partial x_1},\frac{\partial}{\partial t_1}]-{\mathcal{J}}[{\mathcal{J}}\frac{\partial}{\partial x_1},\frac{\partial}{\partial t_1}]
-{\mathcal{J}}[\frac{\partial}{\partial x_1},{\mathcal{J}}\frac{\partial}{\partial t_1}]\\[5pt]
&=[\frac{\partial}{\partial x_1},f\frac{\partial}{\partial t_1}]-
{\mathcal{J}}[\frac{\partial}{\partial x_1},f\frac{\partial}{\partial t_2}]\\[5pt]
&=f_{x_1}(p)\frac{\partial}{\partial t_t}+f_{y_1}(p)\frac{\partial}{\partial t_2}\neq 0
\end{array}
$$
Denote by $\{\gamma_1,\ldots,\gamma_g\}$, $\{\gamma'_1,\ldots,\gamma'_g\}$, 
respectively be a basis of $H^{1,0}_{\overline{\partial}}$ on the first and on the second copy of $\Sigma_g$, respectively. 
Then
$$
H^{(2,0)}_{\overline{\partial}}(X)\simeq\hbox{\rm Span}_\C\langle \gamma_r\wedge \gamma'_s,\quad 1\leq r,s\leq g\rangle 
$$
and clearly $d(\gamma_r\wedge \gamma'_s)=0$, for every $1\leq r,s\leq g$. Then $h_J^-(X)=2g^2$.
Therefore,
$$
h_{\mathcal{J}}^-(M)\geq 2g^2.
$$
\end{proof}
\begin{rem}
The previous Proposition gives a positive aswer to the question raised in \cite[Question 5.2]{ATZ} where it was asked for examples of non integrable almost complex structures 
$J$ on a compact $2n$-dimensional manifold with $h^-_J(M)>n(n-1)$. 
\end{rem}

\end{document}